%--------------------------------------------------
% AMS-LaTeX Paper ************************************************************
% **** -----------------------------------------------

%%%%%%%%%%%%%%%%%%%%%%%%%%%%%%%%%%%%%%%%%%%%%%%%%%%%%%%%%%%%%%%%%%%%%%%%%%%%%%
\documentclass[10pt]{amsart}
\usepackage{a4}
\usepackage{amssymb}
\usepackage{array}
\usepackage{booktabs}
\usepackage{multirow}
\usepackage{hhline}
\usepackage{comment}
\usepackage{color}
\usepackage{amsmath,amsthm}
\usepackage{amssymb,latexsym}
\usepackage{enumerate}

\pagestyle{plain}
%\usepackage[active]{srcltx}
%\oddsidemargin 0pt \evensidemargin 0pt
%double space command
% Please comment for usual line space
%\renewcommand{\baselinestretch}{2}

\vfuzz2pt % Don't report over-full v-boxes if over-edge is small
\hfuzz2pt % Don't report over-full h-boxes if over-edge is small

% THEOREMS ---------------------------------------------------------------

  % not numbered

\newtheorem{theorem}{Theorem}[section]
\newtheorem{lemma}[theorem]{Lemma}
\newtheorem{proposition}[theorem]{Proposition}

\theoremstyle{definition}

\theoremstyle{remark}
\newtheorem{remark}[theorem]{Remark}

%% A numbered theorem with a fancy name:
%\newtheorem{mainthm}[thm]{Main Theorem}
%% Numbered objects of "non-theorem" style (text roman):
%\theoremstyle{definition}
%\newtheorem{defin}[thm]{Definition}
%\newtheorem{rem}[thm]{Remark}
%\newtheorem{exa}[thm]{Example}
%% An unnumbered remark:
%\newtheorem*{xrem}{Remark}
%% Equations numbered by section:

\numberwithin{equation}{section}

% MATH -------------------------------------------------------------------

\hyphenation{a-me-na-bi-li-ty}

\def\mconv{\stackrel{\mathrm m}{\longrightarrow}}

%%% ----------------------------------------------------------------------
\begin{document}
\title{Asymptotic Spectral Distributions of Distance $k$-Graphs of Cartesian Product Graphs}
\author{Yuji Hibino}
\author{Hun Hee Lee}
\author{Nobuaki Obata}

\address{Yuji Hibino:
Department of Mathematics, Saga University,
Saga, 840-8502, Japan}
\email{hibinoy@cc.saga-u.ac.jp}

\address{Hun Hee Lee:
Department of Mathematical Sciences,
Seoul National University,
San56-1 Shinrim-dong Kwanak-gu
Seoul 151-747, Republic of Korea}
\email{hunheelee@snu.ac.kr}

\address{Nobuaki Obata:
Graduate School of Information Sciences, Tohoku University,
Sendai, 980-8579, Japan}
\email{obata@math.is.tohoku.ac.jp}

\keywords{adjacency matrix, Cartesian product graph, central limit theorem, 
distance $k$-graph, Hermite polynomials, quantum probability, spectrum}
\thanks{2000 \it{Mathematics Subject Classification}.
\rm{Primary 05C50; Secondary 05C12, 47A10, 81S25}.}

\begin{abstract}
Let $G$ be a finite connected graph on two or more vertices
and $G^{[N,k]}$ the distance $k$-graph of the $N$-fold Cartesian power of $G$.
For a fixed $k\ge1$, we obtain explicitly the large $N$ limit of 
the spectral distribution (the eigenvalue distribution of the adjacency matrix) of $G^{[N,k]}$.
The limit distribution is described in terms of the Hermite polynomials.
The proof is based on asymptotic combinatorics along with quantum probability theory. 
\end{abstract}

\maketitle

\section{Introduction}

Since Vershik \cite{Vershik95} emphasized the importance of 
asymptotic problems in combinatorics, various approaches have been developed 
from different branches of mathematics. 
The main question in this context is to explore the limit behavior of 
a combinatorial object when it grows.
\textit{Asymptotic spectral analysis} of a growing graph is a subject in this line
with wide applications to structural analysis of complex networks.

In this paper, we study a particular class of growing graphs naturally induced
from the Cartesian powers of a finite connected graph. 
In fact, we will prove the following main result.

\begin{theorem}\label{1thm:main result}
Let $G=(V,E)$ be a finite connected graph with $|V|\ge2$.
For $N\ge1$ and $k\ge1$ let $G^{[N,k]}$ be the distance $k$-graph of $G^N=G\times \cdots\times G$ 
($N$-fold Cartesian power)
and $A^{[N,k]}$ its adjacency matrix.
Then, for a fixed $k\ge1$, 
the eigenvalue distribution of $N^{-k/2}A^{[N,k]}$ converges in moments as $N\rightarrow\infty$
to the probability distribution of
\begin{equation}\label{1eqn:main limit}
\left(\frac{2|E|}{|V|}\right)^{k/2}
\frac{1}{k!}\Tilde{H}_k(g),
\end{equation}
where $\Tilde{H}_k$ is the monic Hermite polynomial of degree $k$ (see Section~\ref{subsec:Main Result})
and $g$ is a random variable obeying the standard normal distribution $N(0,1)$.
\end{theorem}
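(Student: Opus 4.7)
The plan is to establish the convergence by the method of moments. Write $\tau_N:=|V|^{-N}\operatorname{tr}$ for the normalized matrix trace on $|V|^N\times|V|^N$ matrices. The eigenvalue distribution of a self-adjoint operator with respect to $\tau_N$ is its spectral distribution, and because the target law $(2|E|/|V|)^{k/2}\Tilde{H}_k(g)/k!$ is a fixed polynomial in a standard normal and hence moment-determined, it is enough to prove
\begin{equation*}
N^{-mk/2}\,\tau_N\bigl((A^{[N,k]})^m\bigr)\ \longrightarrow\ \Bigl(\tfrac{2|E|}{|V|}\Bigr)^{mk/2}\frac{1}{(k!)^m}\,\E\bigl[\Tilde{H}_k(g)^m\bigr]
\end{equation*}
for every integer $m\ge 1$.

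The first step is a combinatorial decomposition of $A^{[N,k]}$. Letting $A^{(j)}$ denote the distance-$j$ matrix of $G$ (so $A^{(0)}=I$ and $A^{(1)}=A$), coordinatewise additivity of distance in $G^N$ yields
\begin{equation*}
A^{[N,k]}=\sum_{j_1+\cdots+j_N=k}A^{(j_1)}\otimes\cdots\otimes A^{(j_N)}=B^{[N,k]}+R^{[N,k]},
\end{equation*}
where $B^{[N,k]}$ collects the tuples that are permutations of $(1^{k},0^{N-k})$ and $R^{[N,k]}$ collects the rest. Setting $X_i:=I^{\otimes(i-1)}\otimes A\otimes I^{\otimes(N-i)}$, the $X_i$ commute pairwise and $B^{[N,k]}=e_k(X_1,\dots,X_N)$ is the $k$-th elementary symmetric polynomial. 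Writing $\tau_G:=|V|^{-1}\operatorname{tr}$, the identities $\operatorname{tr}(A)=0$ and $\operatorname{tr}(A^2)=2|E|$ give $\tau_G(A)=0$ and $\tau_G(A^2)=2|E|/|V|=:\sigma^2$, so each $X_i$ behaves under $\tau_N$ like a centered random variable of variance $\sigma^2$.

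The core computation is the $m$-th moment of $B^{[N,k]}$. Expanding $(B^{[N,k]})^m$ and using the tensor factorization of $\tau_N$, each summand becomes $\prod_{i=1}^N\tau_G(A^{n_i})$ where $n_i:=\#\{r:i\in S_r\}$ and $(S_1,\dots,S_m)$ ranges over $m$-tuples of $k$-subsets of $\{1,\dots,N\}$. Since $\tau_G(A)=0$, every coordinate with $n_i=1$ kills the contribution, so surviving configurations have all $n_i\in\{0,2,4,\dots\}$, and the dominant ones are pair-matchings ($n_i\in\{0,2\}$): these produce a term of order $N^{mk/2}\sigma^{mk}$ times a combinatorial count which, by the classical CLT statement $N^{-k/2}e_k(Y_1,\dots,Y_N)\to \Tilde{H}_k(g)/k!$ in moments for i.i.d.\ centered unit-variance $Y_i$ (applied here to $Y_i=X_i/\sigma$), equals $(k!)^{-m}\E[\Tilde{H}_k(g)^m]$.

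The main obstacle will be showing that $R^{[N,k]}$ contributes nothing to the limit. Any $R^{[N,k]}$-summand has at most $k-1$ nonzero tensor coordinates (since a $j_i\ge 2$ absorbs at least two of the $k$ units), as opposed to exactly $k$ for a $B^{[N,k]}$-summand. Hence in any $m$-fold product of summands containing at least one factor from $R^{[N,k]}$, the total number of active tensor slots is at most $mk-1$, and the requirement $n_i\ge 2$ at each active coordinate forces at most $(mk-1)/2$ distinct active coordinates, yielding only $O(N^{(mk-1)/2})$ relevant tuples. Dividing by $N^{mk/2}$ sends all such cross-terms to zero, and uniformity follows from the $O(1)$ operator-norm bound on the finitely many $A^{(j)}$. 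Combining with the main-term moment computation gives the stated limit.
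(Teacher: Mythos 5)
Your overall architecture coincides with the paper's: the decomposition $A^{[N,k]}=B^{[N,k]}+R^{[N,k]}$ is exactly the paper's $A^{[N,k]}=A^{(N,k)}+\sum_{\lambda\neq\lambda_0}C(\lambda)$ (Lemma~\ref{3lem:expression of A}), and your estimate for the remainder --- every active tensor coordinate must carry multiplicity at least $2$ because $\varphi_{\mathrm{tr}}(D^{[j]})=0$, an $R$-summand activates at most $k-1$ coordinates, hence at most $\lfloor (mk-1)/2\rfloor$ distinct active coordinates and $O(N^{(mk-1)/2})$ surviving tuples --- is the same counting as Lemma~\ref{lem:estimate C(lambda)}, merely organized so as to bound the cross-terms in the expansion of $\varphi(((B+R))^m)$ directly rather than term-by-term via the transfer result (Proposition~\ref{1prop:basic convergence result}). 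That part is sound, and your direct handling of cross-terms is arguably cleaner since it sidesteps the tracial-state and uniform-moment-bound hypotheses the paper needs (recall the distance matrices $D^{[j]}$ of a general $G$ need not commute, so $B^{[N,k]}$ and $R^{[N,k]}$ need not commute; your expansion is still valid because the trace of a tensor product factorizes).

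The one place where you and the paper genuinely diverge is the main term, and it is also where your writeup stops short. The statement $N^{-k/2}e_k(Y_1,\dots,Y_N)\mconv \Tilde H_k(g)/k!$ for i.i.d.\ centered unit-variance $Y_i$ is precisely the paper's Theorem~\ref{2thm:convergence of B[n]}, i.e.\ its main technical result, which it proves by an induction built on the identity $b^{(N,1)}b^{(N,n)}=(n+1)b^{(N,n+1)}+(N-n+1)b^{(N,n-1)}+F^{(N,n)}(b^2-1)$, designed to reproduce the three-term recurrence $x\Tilde H_n=\Tilde H_{n+1}+n\Tilde H_{n-1}$. You instead cite this as classical (it is indeed known in the literature on symmetric statistics, and moment convergence follows from boundedness of the $Y_i$ here), and in parallel you sketch a direct pair-matching computation --- but the sketch asserts rather than proves the key combinatorial identity, namely that the count of $m$-tuples of $k$-subsets with all covering numbers $n_i\in\{0,2\}$ matches the diagram formula for $\E[\Tilde H_k(g)^m]$ (perfect matchings of $m$ groups of $k$ points with no within-group edges, automatic here since each $S_r$ is a set, with the $(k!)^{-m}$ accounting for the unordered subsets). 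If you want the argument self-contained you must either carry out that identification or reproduce a recurrence argument of the paper's type. Two minor points: your appeal to moment-determinacy of the limit law is unnecessary (the theorem claims only convergence in moments) and is in fact doubtful for $k\ge3$, where Carleman's condition fails for polynomials of degree $\ge3$ in a Gaussian; and for $mk$ odd you should note that both the pair-matching count and $\E[\Tilde H_k(g)^m]$ vanish by parity.
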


It is noteworthy that the limit distribution \eqref{1eqn:main limit} is 
obtained explicitly and is universal 
in the sense that it is independent of the details of a factor $G$.
Namely, for a large $N$, spectral structure of the distance $k$-graph of $G^N$ is
dominated by the product structure.
This shares a common nature with the central limit theorem in probability theory.
In fact, we will prove the above result along with quantum 
(noncommutative) probability theory \cite{HO-BOOK},
where central limit theorems of various kinds
have been studied from algebraic and combinatorial viewpoints.

The study of asymptotic spectral distribution of $G^{[N,k]}$ for a large $N$ limit
appeared first in \cite{Kurihara-Hibino2011} where
the case of $G=K_2$ (the complete graph on two vertices) and $k=2$ was studied
by means of quantum decomposition.
Later in \cite{Obata2012} 
the spectrum of the distance $k$-graph of $H(N,2)=K_2^N$
is explicitly obtained in terms of the Krawtchouk polynomials for arbitrary $1\le k\le N$.
Then, by using certain limit formulas of the Krawtchouk polynomials, 
the asymptotic spectral distribution of the distance $k$-graph of $H(N,2)$ is determined.
The result is a special case of \eqref{1eqn:main limit} with $|V|=2$ and $|E|=1$.
In the recent paper \cite{Hibino} the above argument  
is extended to cover the distance $k$-graph of the Hamming graph $H(N,d)=K_d^N$.
The result is again a special case of \eqref{1eqn:main limit}.
While, the case of $G$ being a star graph and $k=2$ is discussed in \cite{Kurihara2012}.
During these studies it has been conjectured that 
the limit distribution does not depend on the detailed structure of $G$,
as the central limit distribution of the sum of 
independent, identically distributed random variables is the normal (Gaussian) law
independently of the distributions of the random variables. 
Our main result shows that this conjecture is true. 

This paper is organized as follows.
In Section \ref{sec:Preliminaries}, recalling some notions and notations in
quantum probability, 
we prepare a useful result on the convergence of algebraic random variables
(Proposition~\ref{1prop:basic convergence result})
and reformulate our main result (Theorem~\ref{3thm:main result}).
In Section \ref{2sec:Convergence} we derive a combinatorial limit formula (Theorem~\ref{2thm:convergence of B[n]}),
which is viewed as an extension of the commutative central limit theorem in quantum probability.
In Section \ref{sec:distance k-graphs} we prove the main result.
Our discussion is based on asymptotic estimation of combinatorial objects,
along with the philosophy of Vershik \cite{Vershik95}.

Finally, we mention some relevant works.
The distance $k$-graphs are introduced originally in the study of distance-regular graphs,
see e.g., \cite{BCN,Fiol1997}.
The adjacency matrix of the distance $k$-graph of a finite graph $G$, say $D^{[k]}=A^{[1,k]}$, is 
nothing else but the $k$-distance matrix of $G$.
Then the distance matrix $D$ of $G$ is defined by
\[
D=\sum_{k=1}^\infty k D^{[k]},
\]
where the right-hand side is in fact a finite sum.
The spectrum of the distance matrix has been actively studied recently, 
in particular, in connection with spectral graph theory, see e.g., \cite{Dalfo} and references cited therein.
Asymptotic spectral analysis of the distance matrix will be an interesting research topic in this connection.
Distance $k$-graphs are used to construct embeddings of graphs into
metric spaces for measuring graph similarity which has wide applications 
in statistical pattern recognition \cite{Czech}.
The asymptotic spectral analysis, being related to the graph embeddings,
is expected to contribute some applications in this line of research.
It is also noteworthy that the probability distribution \eqref{1eqn:main limit} is derived
by Hora \cite{Hora} from the asymptotic behavior of the Young graph 
(branching rule of representations of the symmetric groups).

\section{Preliminaries}
\label{sec:Preliminaries}

\subsection{Algebraic Probability Space}

An \textit{algebraic probability space} is a pair $(\mathcal{A},\varphi)$,
where $\mathcal{A}$ is a $*$-algebra over the complex number field $\mathbb{C}$
with multiplication identity $1=1_{\mathcal{A}}$
and $\varphi$ a state on it, i.e., $\varphi:\mathcal{A}\rightarrow\mathbb{C}$ is a $\mathbb{C}$-linear 
function on $\mathcal{A}$ satisfying
$\varphi(1)=1$ and $\varphi(a^*a)\ge0$ for all $a\in\mathcal{A}$.
We do not assume any topological conditions.
An element $a\in\mathcal{A}$ is called an \textit{(algebraic) random variable} and
it is called \textit{real} if $a^*=a$.
It is known that 
\[
\varphi(a^*)=\overline{\varphi(a)},
\qquad a\in\mathcal{A},
\]
and the Schwarz inequality holds:
\[
|\varphi(a^*b)|^2\le \varphi(a^*a)\varphi(b^*b),
\qquad a,b\in\mathcal{A}.
\]
In particular, for real random variables $a=a^*$ and $b=b^*$ we have
\[
|\varphi(ab)|^2\le \varphi(a^2)\varphi(b^2).
\]
A state $\varphi$ is called \textit{tracial} if
\[
\varphi(ab)=\varphi(ba),
\qquad
a,b\in \mathcal{A}.
\]

For a real random variable $a\in\mathcal{A}$ there exists a probability distribution $\mu$
on the real line $(-\infty,+\infty)$ such that
\[
\varphi(a^m)=\int_{-\infty}^{+\infty} x^m\mu(dx),
\qquad
m=1,2,\dots.
\]
The above $\mu$ is called the \textit{spectral distribution} of $a$ in the state $\varphi$.
The existence of $\mu$ follows from the Hamburger theorem; 
however, the uniqueness does not hold
in general due to the famous indeterminate moment problem,
for further details see \cite[Chapter~1]{HO-BOOK}.

\subsection{Convergence in Moments}

Let $(\mathcal{A}_n,\varphi_n)$, $n=1,2,\dots$, and $(\mathcal{A},\varphi)$ be algebraic probability spaces.
We say that a sequence of real random variables $a_n\in\mathcal{A}_n$ converges to 
a real random variable $a\in\mathcal{A}$ in moments if
\[
\lim_{n\rightarrow\infty}\varphi_n(a_n^m)=\varphi(a^m),
\qquad m=1,2,\dots.
\]
In this case we write
\[
a_n \mconv a
\]
for simplicity.

\begin{proposition}
If $a_n \mconv a$, then for any polynomial $p(x)$ we have
$p(a_n) \mconv p(a)$.
\end{proposition}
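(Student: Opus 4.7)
The plan is to reduce the claim to the given moment convergence hypothesis by exploiting $\mathbb{C}$-linearity of the states together with the fact that a polynomial in a single variable, raised to any power, is again a polynomial. First I would note that for the statement to fit the framework of the preceding definition of convergence in moments, the random variables $p(a_n)$ and $p(a)$ must be real (self-adjoint); this is guaranteed when $p$ has real coefficients, so I restrict to that case (which is the only case that will be used in the sequel).

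Fix an integer $m\ge 1$. If $\deg p = k$, then $p(x)^{m}$ is a polynomial in $x$ of degree at most $mk$, so we may write
\[
p(x)^{m}=\sum_{j=0}^{mk} c_{j}^{(m)} x^{j},
\qquad c_{j}^{(m)}\in\mathbb{R}.
\]
Since $a_n$ commutes with itself, the same algebraic identity holds in $\mathcal{A}_n$:
\[
p(a_n)^{m}=\sum_{j=0}^{mk} c_{j}^{(m)} a_n^{j},
\]
and analogously for $a$ in $\mathcal{A}$. Applying $\varphi_n$ to the first identity and $\varphi$ to the second, linearity of the states gives
\[
\varphi_n\bigl(p(a_n)^{m}\bigr)=\sum_{j=0}^{mk} c_{j}^{(m)}\varphi_n(a_n^{j}),
\qquad
\varphi\bigl(p(a)^{m}\bigr)=\sum_{j=0}^{mk} c_{j}^{(m)}\varphi(a^{j}).
\]

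Now I would invoke the hypothesis $a_n\mconv a$, which asserts $\varphi_n(a_n^{j})\to\varphi(a^{j})$ for every $j\ge 1$; the case $j=0$ is the normalization $\varphi_n(1)=1=\varphi(1)$. Since the sum above is finite, term-by-term passage to the limit yields $\varphi_n(p(a_n)^{m})\to\varphi(p(a)^{m})$. As $m\ge 1$ was arbitrary, this is precisely $p(a_n)\mconv p(a)$.

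There is really no substantive obstacle in this argument; the only point worth flagging is the bookkeeping that $p(x)^m$ itself is a polynomial whose coefficients $c_j^{(m)}$ do not depend on $n$, which is exactly what allows one to commute the limit in $n$ past the finite sum. Once that is observed, the proposition is an immediate consequence of the $\mathbb{C}$-linearity of the states $\varphi_n$ and $\varphi$.
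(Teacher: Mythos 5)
Your argument is correct and is precisely the reasoning the paper has in mind: the paper states this proposition without proof, remarking only that "the above assertion is obvious," and the obvious argument is exactly the one you give (expand $p(x)^m$ as a polynomial with $n$-independent coefficients, apply linearity of the states, and pass to the limit term by term in the finite sum). Your added remark about restricting to real coefficients so that $p(a_n)$ is self-adjoint is a sensible point of care that the paper leaves implicit.
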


The above assertion is obvious. However, generalization to
multivariable case is not trivial.
For real random variables $a,b,\dots,c\in\mathcal{A}$ the quantities of the form:
\[
\varphi(a^{\alpha_1}b^{\beta_1}\dotsb c^{\gamma_1}
\dotsb a^{\alpha_i}b^{\beta_i}\dotsb c^{\gamma_i}\dotsb),
\]
where $\alpha_i,\beta_i\dots,\gamma_i$ are non-negative integers,
are called \textit{$m$-th mixed moments} of $a,b,\dots,c\in\mathcal{A}$ with 
$m=\sum_i(\alpha_i+\beta_i+\dotsb+\gamma_i)$.

\begin{proposition}\label{1prop:basic convergence result}
Let $(\mathcal{A}_n,\varphi_n)$, $n=1,2,\dots$, and $(\mathcal{A},\varphi)$ be algebraic probability spaces.
Let $k\ge1$ be a fixed integer.
Let $a_n=a_n^*,z_{1n}=z_{1n}^*,\dots, z_{kn}=z_{kn}^* \in\mathcal{A}_n$,
$n=1,2,\dots$, and $a=a^*\in\mathcal{A}$ be real random variables,
and $\zeta_1,\dots,\zeta_k\in\mathbb{R}$.
Assume the following conditions:
\begin{enumerate}
\item[\upshape (i)] $a_n \mconv a $ and $z_{in}\mconv \zeta_i 1$ for $i=1,2,\dots,k$;
\item[\upshape (ii)] $\varphi_n$ is a tracial state for $n=1,2,\dots$;
\item[\upshape (iii)] $\{a_n, z_{1n}\,, \dots, z_{kn}\}\subset\mathcal{A}_n$ have uniformly bounded mixed moments in the sense that
\begin{align*}
C_m
=\sup_n \max
&\bigg\{|\varphi_n(a_n^{\alpha_1}z_{1n}^{\beta_1}\dotsb z_{kn}^{\delta_1}
\dotsb a_n^{\alpha_i}z_{1n}^{\beta_i}\dotsb z_{kn}^{\delta_i}\dotsb)|\,; \\
&\qquad\qquad\qquad
\begin{array}{l}
\text{$\alpha_i,\beta_i,\dots,\delta_i\ge0$ are integers} \\
\sum_i(\alpha_i+\beta_i+\dotsb+\delta_i)=m
\end{array}
\bigg\}<\infty.
\end{align*}
\end{enumerate}
Then, for any non-commutative polynomial $p(x,y_1,\dots,y_k)$ we have
\begin{equation}\label{1eqn:convergence in moments (300)}
p(a_n,z_{1n},\dots,z_{kn}) \mconv p(a,\zeta_11,\dots,\zeta_k1).
\end{equation}
\end{proposition}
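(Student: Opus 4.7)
The plan is to reduce the claim to convergence of mixed moments of monomials by linearity, then to proceed by induction on the total degree of a monomial, using the tracial state hypothesis together with the Cauchy--Schwarz inequality to replace each $z_{in}$ by its scalar limit $\zeta_i 1$ at the price of errors that vanish in the limit.

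More precisely, by linearity of $\varphi_n$ and $\varphi$, the convergence \eqref{1eqn:convergence in moments (300)} is equivalent to the assertion that for every non-commutative word $X_1 X_2 \cdots X_m$ with each $X_j \in \{a_n, z_{1n}, \dots, z_{kn}\}$,
\[
\varphi_n(X_1 X_2 \cdots X_m) \To \varphi(\bar X_1 \bar X_2 \cdots \bar X_m),
\]
where $\bar X_j = a$ if $X_j = a_n$ and $\bar X_j = \zeta_i 1$ if $X_j = z_{in}$. Set $w_{in} = z_{in} - \zeta_i 1$; hypothesis~(i) immediately yields the ``small variance'' fact
\[
\varphi_n(w_{in}^2) = \varphi_n(z_{in}^2) - 2\zeta_i\varphi_n(z_{in}) + \zeta_i^2 \To 0.
\]
I would then induct on $m$. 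For $m=0$ the claim is trivial, and if the word contains no $z_{in}$ it reduces to $a_n^m \mconv a^m$, which is immediate from~(i). Otherwise, pick any occurrence of some $z_{in}$, write the word as $Uz_{in}V$, and substitute $z_{in} = \zeta_i 1 + w_{in}$ to obtain
\[
\varphi_n(U z_{in} V) = \zeta_i\, \varphi_n(UV) + \varphi_n(U w_{in} V).
\]
The induction hypothesis applied to $UV$ (degree $m-1$) handles the first term, giving the limit $\zeta_i\,\varphi(\bar U\bar V) = \varphi(\bar U(\zeta_i 1)\bar V)$, which is precisely the desired limit for that summand.

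The heart of the argument is the error estimate. Using traciality of $\varphi_n$ to cycle $w_{in}$ to the left end, and then the Cauchy--Schwarz inequality,
\[
|\varphi_n(U w_{in} V)|^2 = |\varphi_n(w_{in} V U)|^2 \le \varphi_n(w_{in}^2)\, \varphi_n((VU)^*(VU)).
\]
After expanding $w_{in} = z_{in} - \zeta_i 1$ wherever it appears inside $(VU)^*(VU)$, the second factor becomes a fixed $\mathbb{C}$-linear combination (with coefficients depending only on the $\zeta_i$) of mixed moments in the self-adjoint generators $\{a_n, z_{1n}, \dots, z_{kn}\}$ of total degree $2(m-1)$, and is therefore uniformly bounded in $n$ by hypothesis~(iii). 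Combined with $\varphi_n(w_{in}^2) \to 0$, the right-hand side vanishes as $n \to \infty$, closing the induction.

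The main obstacle is aligning the two ingredients in the error estimate: traciality (hypothesis~(ii)) is needed to move $w_{in}$ into a position where Cauchy--Schwarz can extract the vanishing factor $\varphi_n(w_{in}^2)$, while the uniform bound in~(iii) is needed so that the companion mixed moment $\varphi_n((VU)^*(VU))$, of degree $2(m-1)$, does not diverge as $n \to \infty$. Both hypotheses thus enter in an essential way; everything else is routine linearity and induction bookkeeping.
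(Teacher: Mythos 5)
Your proof is correct and follows essentially the same route as the paper's: reduction to monomials by linearity, induction on the degree, and the combination of traciality with the Cauchy--Schwarz inequality to extract the vanishing factor $\varphi_n(w_{in}^2)$ while hypothesis (iii) bounds the companion mixed moment. (Your remark about expanding $w_{in}=z_{in}-\zeta_i 1$ inside $(VU)^*(VU)$ is unnecessary, since $U$ and $V$ are words in the original generators and contain no $w_{in}$, but this is harmless.)
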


\begin{remark}
Strictly speaking,
\eqref{1eqn:convergence in moments (300)} is abuse of notation  
because $p(a_n,z_{1n},\dots,z_{kn})$ is not necessarily real.
We understand tacitly \eqref{1eqn:convergence in moments (300)} to be
\[
\lim_{n\rightarrow\infty}\varphi_n(p(a_n,z_{1n},\dots,z_{kn})^m)
=\varphi(p(a,\zeta_11,\dots,\zeta_k1)^m),
\qquad m=1,2,\dots.
\]
\end{remark}

\begin{remark}
Obviously, condition (ii) in Proposition~\ref{1prop:basic convergence result}
may be replaced with
\begin{enumerate}
\item[(ii$^\prime$)] $\varphi_n$ restricted to the $*$-subalgebra generated by
$\{a_n, z_{1n}\,, \dots, z_{kn}\}$ is tracial.
\end{enumerate}
Then we note that if $\{a_n, z_{1n}\,, \dots, z_{kn}\}$ are mutually commutative, 
conditions (ii) and (iii) are redundant.
In fact, as condition (ii$^\prime$) is trivially satisfied (ii) is redundant.
For condition (iii) we first observe that
\begin{equation}\label{1eqn:in remark}
|\varphi_n(a_n^{\alpha_1}z_{1n}^{\beta_1}\dotsb z_{kn}^{\delta_1}
\dotsb a_n^{\alpha_i}z_{1n}^{\beta_i}\dotsb z_{kn}^{\delta_i}\dotsb)|
=|\varphi_n(a_n^{\alpha}z_{1n}^{\beta}\dotsb z_{kn}^{\delta})|,
\end{equation}
where $\alpha=\sum_i\alpha_i$, $\beta=\sum_i\beta_i$, $\dots$, $\delta=\sum_i\delta_i$.
Applying the Schwarz inequality repeatedly we have
\begin{align*}
|\varphi_n(a_n^{\alpha}z_{1n}^{\beta}z_{2n}^{\gamma}\dotsb z_{kn}^{\delta})|^2
&\le \varphi_n(a_n^{2\alpha})\varphi_n(z_{1n}^{2\beta}z_{2n}^{2\gamma}\dotsb z_{kn}^{2\delta}), \\
|\varphi_n(z_{1n}^{2\beta}z_{2n}^{2\gamma}\dotsb z_{kn}^{2\delta})|^2
&\le \varphi_n(z_{1n}^{4\beta})\varphi_n(z_{2n}^{4\gamma} \dotsb z_{kn}^{4\delta}), \\
& \dotsb. 
\end{align*}
Finally, \eqref{1eqn:in remark} is bounded by a product of moments of $a_n\,,z_{1n}\,,\dots, z_{kn}$,
which remain finite as $n\rightarrow\infty$ since they are convergent sequences by condition (i).
Thus, condition (iii) holds.
\end{remark}

\begin{proof}[Proof of Proposition~\ref{1prop:basic convergence result}]
Since $p(a_n,z_{1n},\dots,z_{kn})^m$, $m\ge1$, is again a non-commutative polynomial in
$a_n,z_{1n},\dots,z_{kn}$, it is sufficient to prove that
\begin{equation}\label{1eqn:in proof 1.1 (100)}
\lim_{n\rightarrow\infty}
\varphi_n(p(a_n,z_{1n},\dots,z_{kn}))
=\varphi(p(a,\zeta_11,\dots,\zeta_k1))
\end{equation}
for all non-commutative polynomials $p$.
Moreover, by virtue of the linearity of a state we need only to prove 
\eqref{1eqn:in proof 1.1 (100)} for all non-commutative monomials of the form:
\begin{equation}\label{1eqn:in proof 1.1 (101)}
p(x,y_1,\dots,y_k)=
x^{\alpha_1}y_1^{\beta_1}\dotsb y_k^{\delta_1}\dotsb 
x^{\alpha_i}y_1^{\beta_i}\dotsb y_k^{\delta_i}\dotsb.
\end{equation}
We will prove this by induction on the degree $m$ of the monomial. 
Here the degree of $p$ in \eqref{1eqn:in proof 1.1 (101)} is defined by
\[
m=\sum_i (\alpha_i+\beta_i+\dotsb+\delta_i).
\]

For $m=1$ we need to show that
\[
\lim_{n\rightarrow\infty}
\varphi_n(a_n)=\varphi(a),
\qquad
\lim_{n\rightarrow\infty}
\varphi_n(z_{in})
=\varphi(\zeta_i1).
\]
But these are obvious by assumption (i) of $a_n \mconv a$ and $z_{in}\mconv \zeta_i 1$.
Let $m\ge1$ and suppose that 
\eqref{1eqn:in proof 1.1 (100)} is true for all non-commutative monomials 
\eqref{1eqn:in proof 1.1 (101)} of degree up to $m$.
Now let $p(x,y_1,\dots,y_k)$ be a non-commutative monomial of degree $m+1$.
We need to prove \eqref{1eqn:in proof 1.1 (100)} for this monomial.

(Case 1) $p(x,y_1,\dots,y_k)=x^{m+1}$.
In this case \eqref{1eqn:in proof 1.1 (100)} holds obviously by the assumption of $a_n \mconv a$.

(Case 2) $p(x,y_1,\dots,y_k)=x^\alpha y_i q$ 
with $\alpha\ge 0$ and $q=q(x,y_1,\dots,y_k)$ being a non-commutative 
monomial of degree $m-\alpha$.
For simplicity we set
\[
p(a_n,z_{1n},\dots,z_{kn})=a_n^\alpha z_{in}w_n\,,
\qquad
p(a,\zeta_11,\dots,\zeta_k1)=a^\alpha\zeta_iw.
\]
Then we have
\begin{align}
&|\varphi_n(a_n^\alpha z_{in} w_n)-\varphi(a^\alpha \zeta_i w)| 
\nonumber \\
&\qquad\le |\varphi_n(a_n^\alpha z_{in} w_n)-\varphi_n(a_n^\alpha \zeta_i w_n)|
+|\varphi_n(a_n^\alpha \zeta_i w_n)-\varphi(a^\alpha \zeta_i w)| 
\nonumber \\
&\qquad=|\varphi_n(a_n^\alpha (z_{in}-\zeta_i1) w_n)|
+|\zeta_i||\varphi_n(a_n^\alpha w_n)-\varphi(a^\alpha w)| 
\nonumber \\
&\qquad=|\varphi_n( (z_{in}-\zeta_i1) w_na_n^\alpha)|
+|\zeta_i||\varphi_n(a_n^\alpha w_n)-\varphi(a^\alpha w)|,
\label{1eqn:in proof Prop 1.2 (23)}
\end{align}
where the last identity is due to assumption (ii).
The second term of \eqref{1eqn:in proof Prop 1.2 (23)} 
tends to $0$ as $n\rightarrow\infty$ by the assumption of induction.
For the first term we apply the Schwarz inequality to obtain
\begin{equation}\label{1eqn:in proof Prop 1.2 (21)}
|\varphi_n( (z_{in}-\zeta_i1) w_na_n^\alpha)|^2
\le \varphi_n((z_{in}-\zeta_i1)^2)\varphi((w_na_n^\alpha)^*(w_na_n^\alpha)). 
\end{equation}
Since $(w_na_n^\alpha)^*(w_na_n^\alpha)$ is a monomial of degree $2m$, 
we have $\varphi((w_na_n^\alpha)^*(w_na_n^\alpha))\le C_{2m}$ 
by the uniformly bounded assumption (iii).
Then \eqref{1eqn:in proof Prop 1.2 (21)} becomes
\[
|\varphi_n( (z_{in}-\zeta_i1) w_na_n^\alpha)|^2
\le C_{2m}  \{\varphi_n(z_{in}^2)-2\zeta_i\varphi_n(z_{in})+\zeta_i^2\} 
\rightarrow0
\quad \text{as $n\rightarrow\infty$}.
\]
Hence \eqref{1eqn:in proof Prop 1.2 (23)} tends to $0$  as $n\rightarrow\infty$.
Consequently,
\[
\lim_{n\rightarrow\infty}\varphi_n(a_n^\alpha z_{in} w_n)=\varphi(a^\alpha \zeta_i w).
\]
Thus, \eqref{1eqn:in proof 1.1 (100)} holds for our monomial $p(x,y_1,\dots,y_k)=x^\alpha y_i q$. 

Finally, we see from (Case 1) and (Case 2) that \eqref{1eqn:in proof 1.1 (100)} is true 
also for all non-commutative monomials $p(x,y_1,\dots,y_k)$ of degree $m+1$.
This completes the proof.
\end{proof}

\subsection{Adjacency Matrix as Algebraic Random Variable}
\label{subsec:Adjacenct Matrix as Algebraic Random Variable}

Let $G=(V,E)$ be a finite graph and $A$ the adjacency matrix.
Let $\mathcal{A}(G)$ be the adjacency algebra, i.e.,
the $*$-algebra generated by $A$.
Define the normalized trace by
\begin{equation}\label{2eqn:normalized trace}
\varphi_{\mathrm{tr}}(a)=\frac{1}{|V|}\,\mathrm{Tr\,} a,
\qquad a\in \mathcal{A}(G).
\end{equation}
Then, $\varphi_{\mathrm{tr}}$ becomes a state on $\mathcal{A}(G)$
and the adjacency matrix $A$ is regarded as a real random variable
of the algebraic probability space $(\mathcal{A}(G),\varphi_{\mathrm{tr}})$.

\begin{proposition}
The spectral distribution of the adjacency matrix $A$ in the state $\varphi_{\mathrm{tr}}$
coincides with the eigenvalue distribution of the graph $G$.
In other words, it holds that
\[
\varphi_{\mathrm{tr}}(A^m)
=\int_{-\infty}^{+\infty} x^m \mu(dx),
\qquad m=1,2,\dots,
\]
where $\mu$ is the eigenvalue distribution of $G$.
\end{proposition}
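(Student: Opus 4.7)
The plan is to unwind the definitions and invoke the spectral theorem for a real symmetric matrix. Recall that by the spectral distribution of $A$ in the state $\varphi_{\mathrm{tr}}$ we mean a probability measure $\nu$ on $\mathbb{R}$ whose moments agree with $\varphi_{\mathrm{tr}}(A^m)$ for all $m\ge 1$, while the eigenvalue distribution $\mu$ of $G$ is, by definition, the empirical measure
\[
\mu=\frac{1}{|V|}\sum_{i=1}^{|V|}\delta_{\lambda_i},
\]
where $\lambda_1,\dots,\lambda_{|V|}$ are the eigenvalues of $A$ listed with multiplicity. So the task reduces to showing that $\mu$ itself satisfies the moment identity in the statement; uniqueness of the representing measure will then be automatic.

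First I would note that $A$ is a real symmetric matrix, hence unitarily diagonalizable with real spectrum $\{\lambda_i\}$. Then
\[
\mathrm{Tr}(A^m)=\sum_{i=1}^{|V|}\lambda_i^m,
\]
which, upon dividing by $|V|$, is precisely $\int_{-\infty}^{+\infty}x^m\,\mu(dx)$ by definition of $\mu$. Combining with \eqref{2eqn:normalized trace} gives
\[
\varphi_{\mathrm{tr}}(A^m)=\int_{-\infty}^{+\infty} x^m\,\mu(dx),\qquad m=1,2,\dots,
\]
which is the desired identity.

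To conclude that $\mu$ truly is \emph{the} spectral distribution (not merely \emph{a} measure whose moments match), I would remark that $\mu$ is supported in the finite set $\{\lambda_i\}$, in particular it is compactly supported. The classical fact that compactly supported measures on $\mathbb{R}$ are uniquely determined by their moments then rules out the indeterminacy alluded to earlier in the excerpt, so $\mu$ coincides with the spectral distribution of $A$ in the state $\varphi_{\mathrm{tr}}$.

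There is no real obstacle here; the statement is essentially an observation that the normalized trace of powers of $A$ encodes the empirical distribution of its eigenvalues. The only point worth a sentence of care is the uniqueness issue, which is settled immediately by compactness of the support of $\mu$.
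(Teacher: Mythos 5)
Your proof is correct and is exactly the standard argument the paper has in mind; the paper simply declares the proof obvious and omits it. Diagonalizing the real symmetric matrix $A$, identifying $\frac{1}{|V|}\mathrm{Tr}(A^m)$ with the $m$-th moment of the empirical eigenvalue measure, and noting that compact support settles the uniqueness issue is all that is needed.
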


The proof is obvious; however, the above relation is a clue to study
the eigenvalue distribution of a graph by means of quantum probabilistic techniques.

\subsection{Main Result}
\label{subsec:Main Result}

Let $G=(V,E)$ be a graph.
For an integer $k\ge1$ the \textit{distance $k$-graph} of $G$ is a graph
$G^{[k]}=(V,E^{[k]})$ with
\[
E^{[k]}=\{\{x,y\}\,;\, x,y\in V, \, \partial_G(x,y)=k\},
\]
where $\partial_G(x,y)$ is the graph distance of $G$.
The distance $1$-graph of $G$ coincides with $G$ itself.

Now we rephrase the main result.
Let $G=(V,E)$ be a finite connected graph with $|V|\ge2$.
For $k\ge1$ and $N\ge1$ let $G^{[N,k]}$ be the distance $k$-graph 
of $G^N=G\times \dotsb\times G$ ($N$-fold Cartesian power).
In general, $G^{[N,k]}$ is not necessarily connected.
The adjacency matrix $A^{[N,k]}$ of $G^{[N,k]}$ is considered 
as a real random variable of the algebraic probability space
$(\mathcal{A}(G^{[N,k]}),\varphi_{\mathrm{tr}})$,
where $\varphi_{\mathrm{tr}}$ is the normalized trace, 
see \ref{subsec:Adjacenct Matrix as Algebraic Random Variable}.
The main result (Theorem \ref{1thm:main result}) is equivalent to the following statement.

\begin{theorem}\label{3thm:main result}
Notations and assumptions being as above,
we have
\[
\frac{A^{[N,k]}}{N^{k/2}}
\mconv \left(\frac{2|E|}{|V|}\right)^{k/2}
\frac{1}{k!}\Tilde{H}_k(g),
\]
where $\Tilde{H}_k$ is the monic Hermite polynomial (see below)
and $g$ is a random variable obeying the standard normal distribution $N(0,1)$.
\end{theorem}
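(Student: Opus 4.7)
The plan is to exploit the natural tensor-product structure of $G^N$ and reduce $A^{[N,k]}/N^{k/2}$ to a polynomial expression in commuting ``coordinate'' random variables whose joint limit is governed by a quantum central limit theorem. For each coordinate $i\in\{1,\dots,N\}$ and each $j\geq 0$, let $D^{(i)}_j$ be the operator on $(\mathbb{C}^V)^{\otimes N}$ acting as the distance-$j$ matrix $D^{[j]}$ in the $i$-th slot and as the identity on the other slots (with $D^{[0]}=I$), and put $A^{(i)}=D^{(i)}_1$. Since graph distance on a Cartesian product is coordinate-wise additive,
$$A^{[N,k]}=\sum_{k_1+\cdots+k_N=k}\;\prod_{i=1}^N D^{(i)}_{k_i}.$$
I would first group the right-hand side by support size $s=|\{i:k_i>0\}|$. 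The block $s=k$ forces every nonzero $k_i$ to equal $1$; its contribution is exactly the elementary symmetric polynomial $L_N:=e_k(A^{(1)},\dots,A^{(N)})$. Every block with $s<k$ necessarily involves at least one $D^{(i)}_j$ with $j\geq 2$.

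The main step is to identify $\lim L_N/N^{k/2}$. Applying Newton's identity
$$e_k=\sum_{\lambda\vdash k}\frac{(-1)^{k-\ell(\lambda)}}{z_\lambda}\,p_\lambda,\qquad p_j=S_j:=\sum_{i=1}^N (A^{(i)})^j,$$
I would use that $\{A^{(i)}\}$ is an identically distributed commuting family in $(\A(G^N),\varphi_{\mathrm{tr}})$: the classical central limit theorem (in its quantum-probability form) gives $S_1/\sqrt{N}\mconv \sigma g$ with $\sigma^2=\varphi_{\mathrm{tr}}(A_G^2)=2|E|/|V|$, a law-of-large-numbers argument gives $S_2/N\mconv \sigma^2\cdot 1$, and for $j\geq 3$ direct moment computation shows $S_j/N^{j/2}\mconv 0$. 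Substituting into Newton's identity and dividing by $N^{k/2}=\prod N^{\lambda_i/2}$, only partitions $\lambda$ with parts in $\{1,2\}$ survive. A partition with $a$ twos and $b$ ones ($2a+b=k$, $z_\lambda=2^aa!b!$) contributes $(-1)^a(\sigma^2)^a(\sigma g)^b/(2^aa!b!)$, and summing these is precisely the probabilists' Hermite expansion
$$\frac{\tilde H_k(x)}{k!}=\sum_{2a+b=k}\frac{(-1)^a x^b}{2^a a!\,b!}.$$
Thus $L_N/N^{k/2}\mconv \sigma^k \tilde H_k(g)/k!$. Passing from the componentwise convergences $S_j/N^{j/2}$ to the limit of the polynomial combination is justified by Proposition~\ref{1prop:basic convergence result}: the entire family commutes, so condition (ii$'$) is automatic and (iii) reduces to uniform bounds on moments of each $S_j/N^{j/2}$.

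Finally I would show that every sub-leading block ($s<k$) vanishes after normalization. For each $m\geq 1$, set $T_m:=\sum_i D^{(i)}_m$. Since $D^{[m]}$ has zero diagonal, $\varphi_{\mathrm{tr}}(D^{(i)}_m)=0$, and the same commutative CLT yields $T_m/\sqrt{N}\mconv$ a centered Gaussian of finite variance. Using the commutativity of all $D^{(i)}_j$'s, a Newton-type expansion rewrites each sub-leading block as a polynomial in the $T_m$'s whose monomials $\prod_m T_m^{r_m}$ satisfy $\sum_m m r_m=k$ and $\sum_m r_m=s<k$. Such a monomial has order $N^{s/2}$ in moments, so dividing by $N^{k/2}$ gives $O(N^{(s-k)/2})\to 0$. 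Combining the three steps yields $A^{[N,k]}/N^{k/2}\mconv (2|E|/|V|)^{k/2}\tilde H_k(g)/k!$.

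The main obstacle is the suppression of the sub-leading blocks: while the scaling is intuitive, making it rigorous requires an explicit Newton-type expansion of $A^{[N,k]}$ in the enlarged commuting family $\{T_m\}_{m\geq 1}$ with combinatorial coefficients controlled uniformly in $N$, together with verification of the bounded mixed-moments hypothesis (iii) of Proposition~\ref{1prop:basic convergence result} for the rescaled family $\{S_1/\sqrt N, T_m/\sqrt N\}$. This combinatorial bookkeeping is precisely what the commutative-CLT extension announced as Theorem~\ref{2thm:convergence of B[n]} is designed to supply.
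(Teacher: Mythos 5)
Your proposal is correct and reaches the stated limit, but the route through the main term is genuinely different from the paper's. Both arguments start from the same decomposition of $A^{[N,k]}$ by coordinate-wise additivity of the Cartesian-product distance (your grouping by support size $s$ is exactly the paper's partition-indexed decomposition $A^{[N,k]}=A^{(N,k)}+\sum_{\lambda\neq\lambda_0}C(\lambda)$ of Lemma~\ref{3lem:expression of A}), and both kill the sub-leading blocks by the observation that a block supported on $s<k$ coordinates contributes only $O(N^{s/2})$ to moments. Where you diverge is the identification of $\lim_N e_k(A^{(1)},\dots,A^{(N)})/N^{k/2}$: the paper (Theorem~\ref{2thm:convergence of B[n]}) derives the operator identity $b^{(N,1)}b^{(N,n)}=(n+1)b^{(N,n+1)}+(N-n+1)b^{(N,n-1)}+F^{(N,n)}(b^2-1)$ and matches it inductively against the three-term recurrence $x\Tilde{H}_n=\Tilde{H}_{n+1}+n\Tilde{H}_{n-1}$, so the Hermite polynomial emerges from its recurrence; you instead expand $e_k$ in power sums via Newton's identity, note that after dividing by $N^{k/2}$ only partitions with parts in $\{1,2\}$ survive, and recognize the surviving sum as the explicit expansion $\Tilde{H}_k(x)/k!=\sum_{2a+b=k}(-1)^ax^b/(2^aa!\,b!)$. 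Your bookkeeping checks out ($(-1)^{k-\ell(\lambda)}=(-1)^a$ and $z_\lambda=2^aa!\,b!$ for $a$ twos and $b$ ones), and Proposition~\ref{1prop:basic convergence result} applies with $a_N=S_1/\sqrt N$ as the non-scalar variable and the remaining rescaled power sums as the $z_{iN}$, conditions (ii) and (iii) being automatic for a commuting family by the paper's remark. Your route buys an explicit, non-inductive formula for the limit; the paper's recurrence never needs the closed-form Hermite coefficients and isolates a single error term $F^{(N,n)}(b^2-1)$ at each step. One inaccuracy to repair in your error analysis: a sub-leading block is a sum over \emph{distinct} indices and is therefore not a polynomial in the $T_m=\sum_iD^{(i)}_m$ alone --- the inclusion--exclusion that removes index collisions introduces mixed single-coordinate sums $\sum_iD^{(i)}_{m_1}D^{(i)}_{m_2}\cdots$, which must be adjoined to your commuting family; each of these is a sum of $N$ identically distributed commuting terms, hence $O(N)$ in moments, so the estimate only improves and the conclusion stands, but the paper's Lemma~\ref{lem:estimate C(lambda)} sidesteps this entirely by counting moment contributions of $C(\lambda)^m$ directly.
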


After the standard terminology (e.g., \cite{Beals-Wong,Chihara}) 
the \textit{Hermite polynomials} $\{H_n(x)\}$ are defined by
the three-term recurrence relation:
\begin{align*}
H_0(x)&=1, \\
H_1(x)&=2x, \\
2xH_n(x)&=H_{n+1}(x)+2nH_{n-1}(x).
\end{align*}
The \textit{monic Hermite polynomials} appeared in Theorem \ref{3thm:main result}
are defined after a simple normalization:
\[
\Tilde{H}_n(x)=2^{-n/2}H_n\left(\frac{x}{\sqrt2}\right),
\qquad n=0,1,2, \dots.
\]
Then we have
\begin{align}
\Tilde{H}_0(x)&=1, \nonumber \\
\Tilde{H}_1(x)&=x, \nonumber \\
x\Tilde{H}_n(x)&=\Tilde{H}_{n+1}(x)+n\Tilde{H}_{n-1}(x).
\label{2eqn:recurrence for Hermite polynomials}
\end{align}
It is known that $\{\Tilde{H}_n(x)\}$ becomes the orthogonal polynomials 
with respect to the standard normal distribution $N(0,1)$. 
We remark that they are not normalized to have norm one; in fact,
\[
\frac{1}{\sqrt{2\pi}}\int_{-\infty}^{+\infty} \Tilde{H}_n(x)^2 e^{-x^2/2}dx= n!\,,
\qquad n=0,1,2, \dots.
\]

%%%%%%%%%%%%%%%%%%%%%%%%%%%%%%%%%%%%%%%%%%%%%%%%%%%%%%%%%%%%%%%%%%%%%%%%%%%%%%%%%%%%%%%%%%
\section{Convergence of Tensor Powers of Algebraic Random Variables}
\label{2sec:Convergence}

Let $(\mathcal{A},\varphi)$ be an arbitrary algebraic probability space.
For $N\ge1$ we consider the $N$-fold tensor power $(\mathcal{A}^{\otimes N},\varphi^{\otimes N})$.
From now on we write $\varphi$ for $\varphi^{\otimes N}$.
For a real random variable $b=b^*\in \mathcal{A}$ and
$i\in\{1,2,\dots,N\}$ we define $b(i)\in\mathcal{A}^{\otimes N}$ by
\[
b(i)=
\overbrace{\mathstrut 1\otimes \dots\otimes 1 \otimes  b \otimes 1\otimes \dots \otimes 1}
^{\text{$N$ factors}},
\]
where $b$ appears at the $i$-th position.
Let $\mathcal{B}_N$ denote the $*$-algebra generated by
$b(1),b(2),\dots,b(N)$.
Obviously, $\mathcal{B}_N$ becomes a commutative $*$-subalgebra of $\mathcal{A}^{\otimes N}$.
For mutually distinct $i_1,\dots,i_n\in\{1,2,\dots,N\}$ we 
define $b(i_1,\dots,i_n)\in\mathcal{B}_N$ by
\begin{align*}
b(i_1,\dots,i_n)
&=b(i_1)\dotsb b(i_n) \\
&=1\otimes \dots\otimes 1 \otimes  b \otimes
1\otimes \dots \otimes 1 \otimes b \otimes 1 \otimes \dotsb \otimes 1,
\end{align*}
where $b$ appears at $i_1$-th,$\dots$, $i_n$-th positions.
Finally for $1\le n\le N$ we set
\begin{equation}\label{3eqn:def of b(N,n)}
b^{(N,n)}
=\sum_{1\le i_1<\dots<i_n\le N} b(i_1,\dots, i_n)
=\frac{1}{n!} \sum_{\substack{i_1,\dots,i_n \\ \neq}} b(i_1,\dots, i_n)
\end{equation}
and for convenience
\[
b^{(N,0)}=1\otimes\dots\otimes 1.
\]

We are interested in the asymptotic spectral distribution of $b^{(N,n)}$ as $N\rightarrow\infty$.
For $n=1$ the result is well known, see e.g., \cite[Chapter 8]{HO-BOOK}.

\begin{theorem}[Commutative law of large numbers]
\label{3thm:CLLN}
For a real random variable $b=b^*\in\mathcal{A}$ we have
\begin{equation}\label{3eqn:CLLN}
\frac{b^{(N,1)}}{N}\mconv \varphi(b)
\quad \text{as $N\rightarrow\infty$}.
\end{equation}
\end{theorem}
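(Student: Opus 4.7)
Theorem~\ref{3thm:CLLN} is the noncommutative counterpart of the classical weak law of large numbers, with the tensor copies $b(1),\dots,b(N)$ playing the role of i.i.d.\ copies of $b$. I would prove it by a direct moment computation: show that for every $m\ge 1$,
\[
\lim_{N\to\infty}\varphi\left(\left(\frac{b^{(N,1)}}{N}\right)^{m}\right)=\varphi(b)^m,
\]
and exploit the two structural features of the setup: the $b(i)$ and $b(j)$ commute for $i\neq j$ because they act on different tensor slots, and $\varphi^{\otimes N}$ factorizes across those slots.

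\textbf{Main step.} Expand $(b^{(N,1)})^m=\sum_{i_1,\dots,i_m=1}^{N}b(i_1)\cdots b(i_m)$ and group the $N^m$ ordered tuples $(i_1,\dots,i_m)$ according to their multiplicity profile $(n_1,\dots,n_N)$, with $n_j\ge 0$ and $n_1+\cdots+n_N=m$. Because the $b(i)$ mutually commute, every tuple with profile $(n_1,\dots,n_N)$ produces the same element $b(1)^{n_1}\otimes\cdots\otimes b(N)^{n_N}$, and there are $m!/(n_1!\cdots n_N!)$ of them. Applying $\varphi^{\otimes N}$ slot by slot gives
\[
\varphi\bigl((b^{(N,1)})^m\bigr)=\sum_{n_1+\cdots+n_N=m}\frac{m!}{n_1!\cdots n_N!}\prod_{j=1}^{N}\varphi(b^{n_j}),
\]
with the convention $\varphi(b^0)=1$.

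\textbf{Asymptotics and conclusion.} I would sort the profiles by $r:=\#\{j:n_j>0\}$, which ranges over $1\le r\le m$. The case $r=m$ forces every nonzero $n_j$ to equal $1$, so the contributing tuples are ordered choices of $m$ distinct indices: there are $N(N-1)\cdots(N-m+1)$ of them, each contributing $\varphi(b)^m$, producing $\varphi(b)^m+O(1/N)$ after division by $N^m$. For each fixed $r<m$ the number of supports is $\binom{N}{r}=O(N^r)$, the number of compositions of $m$ into $r$ positive parts is a constant depending only on $m$, and each $\prod_j|\varphi(b^{n_j})|$ is bounded by a constant depending only on $m$ and $b$; so this block contributes $O(N^{r-m})=O(1/N)$ once divided by $N^m$. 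Summing over $r$ yields the desired limit. There is no real obstacle here: the entire argument is multinomial bookkeeping whose engine is precisely the factorization of $\varphi^{\otimes N}$, and the same template — with a more delicate combinatorial count replacing the trivial one above — is what will drive the higher-order statements for $b^{(N,n)}/N^n$ treated subsequently.
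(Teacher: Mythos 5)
Your argument is correct and complete: the multinomial expansion of $\varphi\bigl((b^{(N,1)})^m\bigr)$ via commutativity and the factorization of $\varphi^{\otimes N}$, the isolation of the all-distinct-indices block contributing $N(N-1)\cdots(N-m+1)\,\varphi(b)^m$, and the $O(N^{r-m})$ bound on the blocks with $r<m$ distinct indices together give $\lim_N\varphi\bigl((b^{(N,1)}/N)^m\bigr)=\varphi(b)^m$ for every $m$, which is exactly the claim. Note that the paper itself supplies no proof of Theorem~\ref{3thm:CLLN} --- it is stated as well known with a citation to \cite[Chapter~8]{HO-BOOK} --- and your computation is precisely the standard moment-method argument that reference would give, so there is no divergence of approach to report; your closing observation that the same bookkeeping template underlies the later estimates (compare the counting in Lemma~\ref{3lem:znN tends to 0} and Lemma~\ref{lem:estimate C(lambda)}) is also accurate.
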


\begin{theorem}[Commutative central limit theorem]
\label{3thm:CLT0}
For a real random variable $b=b^*\in\mathcal{A}$ with $\varphi(b)=0$ and $\varphi(b^2)=1$
we have
\begin{equation}\label{3eqn:CLT}
\frac{b^{(N,1)}}{\sqrt{N}}\mconv g
\quad \text{as $N\rightarrow\infty$},
\end{equation}
where $g$ is a Gaussian random variable obeying the standard normal law $N(0,1)$.
\end{theorem}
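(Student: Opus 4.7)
The plan is to prove convergence in moments by a direct computation, which is the standard moment-method proof of the central limit theorem cast in the algebraic-probabilistic language of the paper. Fix $m\ge 1$. Expanding the $m$-th power and using linearity of $\varphi=\varphi^{\otimes N}$, I would write
\[
\varphi\!\left(\!\left(\frac{b^{(N,1)}}{\sqrt N}\right)^{\!m}\right)
=\frac{1}{N^{m/2}}\sum_{i_1,\dots,i_m=1}^{N}
\varphi\bigl(b(i_1)b(i_2)\cdots b(i_m)\bigr),
\]
and then group the $N^m$ summands according to the set partition $\pi$ of $\{1,\dots,m\}$ induced by the equivalence $j\sim j'\iff i_j=i_{j'}$. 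Because the $b(i)$'s sit in different tensor factors and $\varphi^{\otimes N}$ acts multiplicatively on simple tensors, each summand factorizes as $\prod_{V\in\pi}\varphi(b^{|V|})$. This is the key algebraic input and the only place where the tensor product structure is really used.

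After this regrouping the expression becomes
\[
\frac{1}{N^{m/2}}\sum_{\pi\in\mathcal P(m)}(N)_{|\pi|}
\prod_{V\in\pi}\varphi(b^{|V|}),
\qquad (N)_r=N(N-1)\cdots(N-r+1),
\]
where $\mathcal P(m)$ denotes the set of partitions of $\{1,\dots,m\}$. I would then perform a dominant-term analysis in three cases: (a) if $\pi$ contains a singleton block, its contribution vanishes because $\varphi(b)=0$; (b) if $\pi$ has no singleton and $|\pi|<m/2$, then $(N)_{|\pi|}/N^{m/2}=O(N^{-1})\to 0$; (c) if $|\pi|=m/2$, then the blocks have mean size $2$ and no singletons, forcing every block to have size exactly $2$. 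Partitions with $|\pi|>m/2$ are automatically ruled out by (a) since they must contain a singleton.

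From these cases the limit of the $m$-th moment equals $0$ when $m$ is odd (no pair partition exists) and equals $|\mathcal P_2(2s)|\cdot\varphi(b^2)^s=(2s-1)!!$ when $m=2s$, since $\varphi(b^2)=1$. Matching with the well-known moments of $N(0,1)$ yields the claim. The only step requiring care is the factorization of mixed $\varphi$-values invoked above, but this is immediate from the definition of the tensor product state on simple tensors; beyond this, the proof is purely combinatorial bookkeeping, and I expect no serious obstacle.
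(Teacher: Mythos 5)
Your proposal is correct: the factorization of $\varphi^{\otimes N}$ over the blocks of the coincidence partition, the vanishing of singleton blocks via $\varphi(b)=0$, and the surviving pair partitions giving $(2s-1)!!$ are exactly the standard moment-method proof. The paper itself does not prove Theorem~\ref{3thm:CLT0} but quotes it as well known from \cite[Chapter~8]{HO-BOOK}, and your argument is essentially the one found there, so there is nothing to compare beyond noting that your write-up fills in a proof the authors chose to omit.
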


The commutative independence of $b(1),b(2),\dots,b(N)$ is essential
in the above statements.
We recall that \eqref{3eqn:CLT} means that
\begin{align*}
\lim_{N\rightarrow\infty}\varphi \bigg(\bigg(\frac{b^{(N,1)}}{\sqrt{N}}\bigg)^m\bigg)
&=\text{($m$-th moment of $N(0,1)$)} \\
&=\begin{cases}
 0, & \text{$m$: odd}, \\
 \dfrac{(2k)!}{2^kk!}\,, & \text{$m=2k$: even},
 \end{cases}
\quad m=1,2,\dots.
\end{align*}
We are now in a position to state a generalization of Theorem \ref{3thm:CLT0}.

\begin{theorem}\label{2thm:convergence of B[n]}
Notations and assumptions being as in Theorem \ref{3thm:CLT0}, we have
\[
\frac{b^{(N,n)}}{N^{n/2}}\mconv \frac{1}{n!}\Tilde{H}_n(g)
\quad \text{as $N\rightarrow\infty$}
\]
for all $n=1,2,\dots$,
where $\Tilde{H}_n$ is the monic Hermite polynomial of degree $n$ defined in
Section \ref{subsec:Main Result}.
\end{theorem}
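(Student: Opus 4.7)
The plan is to reduce the statement to the $n=1$ case (the commutative central limit theorem, Theorem \ref{3thm:CLT0}) by means of the classical Newton identity expressing elementary symmetric polynomials in terms of power sums. Put
\[
p_k = \sum_{i=1}^N b(i)^k,
\qquad
a_N = \frac{p_1}{\sqrt{N}},
\qquad
z_{k,N} = \frac{p_k}{N^{k/2}} \quad (k \ge 2).
\]
Since $b(1),\dots,b(N)$ mutually commute in the tensor algebra $\mathcal{B}_N$ and $b^{(N,n)}$ is by definition the $n$-th elementary symmetric polynomial $e_n(b(1),\dots,b(N))$ in these commuting elements, Newton's identity yields, upon dividing by $N^{n/2}$,
\[
n \cdot \frac{b^{(N,n)}}{N^{n/2}}
= a_N \frac{b^{(N,n-1)}}{N^{(n-1)/2}} - z_{2,N} \frac{b^{(N,n-2)}}{N^{(n-2)/2}}
+ \sum_{k=3}^{n} (-1)^{k-1} z_{k,N} \frac{b^{(N,n-k)}}{N^{(n-k)/2}}.
\]

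The second step is to identify the limits of the individual building blocks. The CLT (Theorem \ref{3thm:CLT0}) gives $a_N \mconv g$. Writing $z_{k,N} = (p_k/N)\cdot N^{1-k/2}$ and applying the commutative LLN (Theorem \ref{3thm:CLLN}) to the real random variable $b^k \in \mathcal{A}$, one obtains $p_k/N \mconv \varphi(b^k)$; multiplying by the scalar prefactor $N^{1-k/2}$ then yields $z_{2,N} \mconv \varphi(b^2) = 1$ and $z_{k,N} \mconv 0$ for every $k \ge 3$.

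All of $a_N, z_{2,N}, \dots, z_{n,N}$ lie in the commutative $*$-subalgebra $\mathcal{B}_N$, so the remark following Proposition \ref{1prop:basic convergence result} tells us that any polynomial expression in these random variables converges in moments to the same polynomial evaluated at $(g, 1, 0, \dots, 0)$. Iterating the Newton recursion displayed above, one finds $b^{(N,n)}/N^{n/2} = Q_n(a_N, z_{2,N}, \dots, z_{n,N})$ for an explicit universal polynomial $Q_n$ independent of $N$, and the specialization $R_n(x) := Q_n(x, 1, 0, \dots, 0)$ therefore satisfies
\[
R_0(x) = 1, \qquad R_1(x) = x, \qquad n R_n(x) = x R_{n-1}(x) - R_{n-2}(x).
\]
Comparing with the normalized Hermite recurrence \eqref{2eqn:recurrence for Hermite polynomials}, a one-line induction identifies $R_n = \Tilde{H}_n/n!$, which is exactly the desired limit.

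The point that requires the most care is ensuring that the joint behaviour of $a_N$ with the scalar-limit variables $z_{k,N}$ is compatible with substitution of the limits into the Newton recursion. Marginal moment convergence is not in general sufficient to conclude convergence of a product, but the commutative-variable remark after Proposition \ref{1prop:basic convergence result} is tailored to precisely this situation because only $a_N$ has a non-scalar limit. Beyond that, the only genuine computational ingredient is the moment-wise vanishing $z_{k,N} \mconv 0$ for $k \ge 3$, which is an immediate consequence of the commutative LLN for $b^k$ combined with the vanishing prefactor $N^{1-k/2}$.
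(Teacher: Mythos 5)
Your proof is correct, but it follows a genuinely different route from the paper's. The paper derives the tailor-made identity \eqref{1lem:preliminary identity}, namely $b^{(N,1)}b^{(N,n)}=(n+1)b^{(N,n+1)}+(N-n+1)b^{(N,n-1)}+F^{(N,n)}(b^2-1)$, which mirrors the three-term Hermite recurrence already at finite $N$; the price is that the error term $F^{(N,n)}(b^2-1)/N^{(n+1)/2}$ is not a power sum, so its vanishing in moments requires a separate combinatorial counting argument (Lemma~\ref{3lem:znN tends to 0}), and the $O(1/N)$ coefficient in the recurrence forces the paper to carry along remainders $Y_{nN}$ through the induction. You instead invoke the classical Newton identity $n\,e_n=\sum_{k=1}^{n}(-1)^{k-1}e_{n-k}p_k$, which is legitimate here since $b(1),\dots,b(N)$ commute and $b^{(N,n)}=e_n(b(1),\dots,b(N))$; this yields an \emph{exact}, $N$-independent polynomial expression $b^{(N,n)}/N^{n/2}=Q_n(a_N,z_{2,N},\dots,z_{n,N})$ with no remainder terms to track, and the auxiliary variables are honest normalized power sums, so $z_{2,N}\mconv\varphi(b^2)=1$ and $z_{k,N}\mconv 0$ for $k\ge3$ follow at once from Theorem~\ref{3thm:CLLN} and the scalar prefactor $N^{1-k/2}$ --- no counting lemma needed. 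The appeal to the commutative-variable remark after Proposition~\ref{1prop:basic convergence result} is exactly the right tool and is applied correctly (all variables lie in the commutative algebra $\mathcal{B}_N$ and only $a_N$ has a non-scalar limit), and your specialized recurrence $nR_n=xR_{n-1}-R_{n-2}$ with $R_0=1$, $R_1=x$ does identify $R_n=\Tilde{H}_n/n!$ upon comparison with \eqref{2eqn:recurrence for Hermite polynomials} (one checks $2R_2=x^2-1$, consistent with $\Tilde{H}_2/2$). What your approach buys is economy: the vanishing of the error terms is reduced to the law of large numbers plus scaling. What the paper's approach buys is a finite-$N$ identity whose structure already exhibits the Hermite three-term recurrence, which is closer in spirit to the quantum-decomposition arguments it generalizes; both proofs ultimately rest on the same Proposition~\ref{1prop:basic convergence result}.
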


Before going into the proof, we observe the case of $n=2$ in detail for grasping the situation.
We keep in mind that $b=b^*\in\mathcal{A}$ with $\varphi(b)=0$ and $\varphi(b^2)=1$.
Starting with the simple identities:
\begin{align*}
b^{(N,1)}b^{(N,1)}
&=\Bigg(\sum_{1\le i\le N} b(i)\Bigg)\Bigg(\sum_{1\le i\le N} b(i)\Bigg)
=\sum_{\substack{i_1,i_2 \\ \neq}} b(i_1,i_2)+\sum_{1\le i\le N} b^2(i) \\
&=2b^{(N,2)}+\sum_{1\le i\le N} b^2(i)
=2b^{(N,2)}+N+\sum_{1\le i\le N} (b^2-1)(i),
\end{align*}
we obtain
\begin{equation}\label{2eqn:B[2] observation}
2\,\frac{b^{(N,2)}}{N}
=\frac{b^{(N,1)}}{\sqrt{N}}\,\frac{b^{(N,1)}}{\sqrt{N}} 
 -1-\frac{1}{N}\sum_{1\le i\le N} (b^2-1)(i).
\end{equation}
For simplicity we set
\begin{equation}\label{3eqn:aN and z1N}
a_N=\frac{b^{(N,1)}}{\sqrt{N}}\,,
\qquad
z_{1N}=\frac{1}{N}\sum_{1\le i\le N} (b^2-1)(i).
\end{equation}
Then \eqref{2eqn:B[2] observation} becomes
\begin{equation}\label{3eqn:B[2] observation2}
2\,\frac{b^{(N,2)}}{N}
=a_N^2 -1- z_{1N}\,.
\end{equation}
Moreover, $a_N$ and $z_{1N}$ are commutative, and 
\[
a_N \mconv g,
\qquad
z_{1N} \mconv 0,
\]
which follow from Theorem \ref{3thm:CLT0} and Theorem \ref{3thm:CLLN}, respectively.
Noting that \eqref{3eqn:B[2] observation2} is a polynomial in $a_N$ and $z_{1N}$,
we apply Proposition~\ref{1prop:basic convergence result} to obtain
\[
2\,\frac{b^{(N,2)}}{N}
\mconv g^2-1=\Tilde{H}_2(g),
\]
which proves Theorem~\ref{2thm:convergence of B[n]} in case of $n=2$.

\begin{proof}[Proof of Theorem \ref{2thm:convergence of B[n]}]
We need notation.
For $1\le n\le N$ and $y\in\mathcal{A}$ we define $F^{(N,n)}(y)\in\mathcal{A}^{\otimes N}$ by
\begin{equation}\label{3eqn:FNn}
F^{(N,n)}(y)
=\sum \overbrace{\mathstrut 1\otimes \dots \otimes b \otimes \dots \otimes
 y\otimes \dots \otimes b \otimes \dotsb \otimes 1}^{\text{$N$ factors}},
\end{equation}
where $b$ appears $n-1$ times and $y$ just once, 
and the sum is taken over all possible arrangements.
Then, after simple calculation we obtain
\begin{equation}\label{1lem:preliminary identity}
b^{(N,1)}b^{(N,n)}=(n+1)b^{(N,n+1)}+(N-n+1)b^{(N,n-1)}+F^{(N,n)}(b^2-1),
\end{equation}
where $1\le n<N$.

For simplicity we set
\begin{equation}\label{3eqn:def of B_n}
B_{nN}=n!\,\frac{b^{(N,n)}}{N^{n/2}}\,,
\quad
z_{nN}=\frac{F^{(N,n)}(b^2-1)}{N^{(n+1)/2}}\,,
\quad 1\le n\le N.
\end{equation}
Obviously, these are members of $\mathcal{B}_N$.
For $n=1$ we have $B_{1N}=a_N$, see also \eqref{3eqn:aN and z1N}.
With these notations \eqref{1lem:preliminary identity} becomes
\begin{equation}\label{2eqn: in proof lemma 2.5 (11)}
B_{n+1,N}=a_N B_{nN}-n B_{n-1,N}
+\frac{n(n-1)}{N}\,B_{n-1,N}-n!z_{nN}\,.
\end{equation}
We will show that
for each $n=1,2,\dots$ there exist a polynomial $p_n(x,y_1,\dots,y_{n-1})$ 
(independent of $N$) such that
\begin{equation}\label{2eqn:preliminary relation}
Y_{nN}\equiv B_{nN}-p_n(a_N\,, z_{1N}\,, z_{2N}\,, \dots, z_{n-1,N})
\end{equation}
is a real random variable in $\mathcal{B}_N$ and
$Y_{nN}\mconv 0$ as $N\rightarrow\infty$.
The assertion for $n=1$ is trivial with 
\begin{equation}\label{3eqn:n=1}
p_1(x)=x,
\qquad
Y_{1N}=0.
\end{equation}
For $n=2$ we see from \eqref{3eqn:B[2] observation2} that
\begin{equation}\label{3eqn:n=2}
p_2(x,y_1)=x^2-1-y_1\,,
\qquad
Y_{2N}=0.
\end{equation}
Suppose that the assertion holds up to $n\ge2$.
Then \eqref{2eqn: in proof lemma 2.5 (11)} becomes
\begin{align}
B_{n+1,N}
&=a_N (p_n+Y_{nN})-n (p_{n-1}+Y_{n-1,N}) \nonumber \\
&\qquad +\frac{n(n-1)}{N}\,(p_{n-1}+Y_{n-1,N})-n!z_{nN} \nonumber\\
&=a_Np_n - n p_{n-1}-n! z_{nN} \nonumber \\
&\qquad+a_NY_{nN}-\bigg\{n-\frac{n(n-1)}{N}\bigg\} Y_{n-1,N} 
+ \frac{n(n-1)}{N}\,p_{n-1}\,.
\label{3eqn:recurrence before limit}
\end{align} 
Hence, setting
\begin{align}
&p_{n+1}(x, y_1,y_2,\dots, y_n)
=x p_n(x, y_1,y_2,\dots, y_{n-1}) \nonumber \\ 
&\qquad\qquad\qquad\qquad\qquad\qquad - n p_{n-1}(x, y_1,y_2,\dots, y_{n-2}) -n! \,y_n\,, 
\label{3eqn:recurrence for p_n}\\
&Y_{n+1,N}
=a_NY_{nN}-\bigg\{n-\frac{n(n-1)}{N}\bigg\} Y_{n-1,N} \nonumber\\
&\qquad\qquad\qquad + \frac{n(n-1)}{N}\,p_{n-1}(a_N\,, z_{1N}\,, z_{2N}\,, \dots, z_{n-2,N}),
\label{3eqn:recurrence for Y_n}
\end{align}
we have
\begin{equation}\label{3eqn:recurrence before limit2}
B_{n+1,N}
=p_{n+1}(a_N\,, z_{1N}\,, z_{2N}\,, \dots, z_{nN})+Y_{n+1,N}\,.
\end{equation}
It is clear that $p_{n+1}(x, y_1,y_2,\dots, y_n)$ is a polynomial
and that $Y_{n+1,N}$ is a real random variable in $\mathcal{B}_N$.
In \eqref{3eqn:recurrence for Y_n} we have 
\[
p_{n-1}(a_N\,, z_{1N}\,, z_{2N}\,, \dots, z_{n-2,N})
\mconv p_{n-1}(g,0,0,\dots,0),
\]
which follows by Proposition~\ref{1prop:basic convergence result} 
and the fact that $z_{nN}\mconv 0$ as $N\rightarrow\infty$ (Lemma \ref{3lem:znN tends to 0} below).
Hence, applying the assumption of induction, we see that $Y_{n+1,N}\mconv 0$.
This completes the induction.

Finally, applying Proposition~\ref{1prop:basic convergence result} to
\eqref{2eqn:preliminary relation}, we obtain
\begin{equation}\label{3eqn:limit 101}
B_{nN}\mconv p_n(g, 0,0,\dots,0),
\qquad n=1,2,\dots.
\end{equation}
On the other hand, we know from \eqref{3eqn:n=1} and \eqref{3eqn:n=2} that
\[
p_1(x)=x,
\qquad
p_2(x,0)=x^2-1.
\]
Moreover, from \eqref{3eqn:recurrence for p_n} we have
\[
p_{n+1}(x,0,\dots,0)=xp_n(x,0,\dots,0)- n p_{n-1}(x,0,\dots,0).
\]
Comparing with the recurrence relation 
\eqref{2eqn:recurrence for Hermite polynomials} satisfied by the monic Hermite polynomials,
we see that
\[
p_n(x,0,\dots,0)=\Tilde{H}_n(x).
\]
Consequently, it follows from \eqref{3eqn:limit 101} that
\[
B_{nN}\mconv \Tilde{H}_n(g),
\qquad n=1,2,\dots,
\]
which completes the proof.
\end{proof}

\begin{lemma}\label{3lem:znN tends to 0}
For $n=1,2,\dots$ we have
\[
z_{nN}=\frac{F^{(N,n)}(b^2-1)}{N^{(n+1)/2}} \mconv 0.
\]
\end{lemma}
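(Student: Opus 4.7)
The plan is to prove convergence in moments directly: for each fixed $m \geq 1$, to show $\varphi(z_{nN}^m) \to 0$ as $N \to \infty$. The essential tool is the tensor-product factorization $\varphi(x_1 \otimes \cdots \otimes x_N) = \prod_{i=1}^N \varphi(x_i)$. Expanding $(F^{(N,n)}(b^2-1))^m$, one obtains a sum indexed by $m$-tuples of ``arrangements,'' where each arrangement picks $n$ distinct coordinates in $\{1,\dots,N\}$, places $b^2-1$ at one of them, and $b$ at the remaining $n-1$. For a fixed such $m$-tuple, let $d_i$ be the total number of times coordinate $i$ is touched across the $m$ arrangements, so that $\sum_i d_i = mn$. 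The accumulated factor at coordinate $i$ is then a commutative polynomial $b^{t_i}(b^2-1)^{s_i}$ with $t_i+s_i=d_i$, and $\varphi$ of the whole tensor equals the product $\prod_i \varphi(b^{t_i}(b^2-1)^{s_i})$.

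The crucial observation is that both $\varphi(b)=0$ (by hypothesis) and $\varphi(b^2-1)=\varphi(b^2)-1=0$. Hence, whenever a coordinate is touched exactly once --- i.e., $d_i=1$, regardless of whether via $b$ or via $b^2-1$ --- the entire product vanishes. Therefore only terms in which every touched coordinate satisfies $d_i \geq 2$ contribute, which forces the number $r$ of distinct touched coordinates to obey $2r \leq \sum_i d_i = mn$, so $r \leq mn/2$.

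To finish, I would count the surviving terms and bound the per-term factor. Choosing $r \leq mn/2$ distinct coordinates from $\{1,\dots,N\}$ gives $O(N^{mn/2})$ possibilities; the remaining combinatorial choices (distributing the $mn$ labels among the $r$ coordinates, assigning $b$ versus $b^2-1$ roles, and grouping the labels into $m$ arrangements of the prescribed shape) contribute a factor bounded by a constant depending only on $m$ and $n$. Each surviving $\prod_i |\varphi(b^{t_i}(b^2-1)^{s_i})|$ is a product of finitely many fixed moments of the single element $b \in \mathcal{A}$, hence bounded by some $C_{m,n} < \infty$. Combining, $|\varphi((F^{(N,n)}(b^2-1))^m)| = O(N^{mn/2})$, and dividing by $N^{m(n+1)/2}$ yields $|\varphi(z_{nN}^m)| = O(N^{-m/2}) \to 0$. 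The main obstacle is really just bookkeeping --- cleanly tracking the combinatorics and checking the power-count $mn/2 - m(n+1)/2 = -m/2$; the conceptual content is the single-occurrence vanishing driven by $\varphi(b)=\varphi(b^2-1)=0$, which is reminiscent of the Isserlis--Wick pairing argument in the classical central limit theorem.
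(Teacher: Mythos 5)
Your proposal is correct and follows essentially the same route as the paper: expand $\varphi(z_{nN}^m)$ by multilinearity and the product structure of $\varphi^{\otimes N}$, kill every term in which some coordinate is touched exactly once using $\varphi(b)=\varphi(b^2-1)=0$, and count the survivors, which are supported on at most $mn/2$ distinct coordinates and hence number $O(N^{[mn/2]})$ against the normalization $N^{m(n+1)/2}$. The only cosmetic difference is that the paper organizes the surviving sum as a leading part $S$ (all local factors of degree exactly $2$, up to one of degree $3$ when $mn$ is odd) plus a remainder $R=o(N^{[mn/2]})$, whereas you bound all survivors at once via the constraint on the number of touched coordinates.
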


\begin{proof}
We need to show that $\varphi(z_{nN}^m)\rightarrow 0$ as $N\rightarrow \infty$ for 
fixed $m,n=1,2,\dots$.
For $m=1$ the assertion is obvious so we assume that $m\ge2$.
For simplicity we set $z=b^2-1$.
By definition we have
\[
F^{(N,n)}(z)
=\sum 1\otimes \dots \otimes b \otimes \dots \otimes
 z\otimes \dots \otimes b \otimes \dotsb \otimes 1,
\]
where $b$ appears $n-1$ times and $z$ just once, 
and the sum is taken over all possible arrangements.
Then $\varphi[(F^{(N,n)}(z))^m]$ is the sum of terms of the form
\begin{equation}\label{2eqn:in proof Lemma 2.5 (1)}
\varphi(1\otimes \dots \otimes (*) \otimes \dots \otimes
 (*)\otimes \dots \otimes (*) \otimes \dotsb \otimes 1),
\end{equation}
where $(*)$ is of the form $b^sz^t$ with $1\le s+t\le m$.
If one of the $(*)$'s is occupied by $b$ or $z$ (i.e., $s+t=1$),
the value of \eqref{2eqn:in proof Lemma 2.5 (1)} is zero since $\varphi(b)=\varphi(z)=0$.
Hence $\varphi[(F^{(N,n)}(z))^m]$ is the sum of the terms
\eqref{2eqn:in proof Lemma 2.5 (1)} such that
$(*)$ is of the form $b^sz^t$ with $2\le s+t\le m$.
We divide the sum into two parts.
Let $S$ be the sum of terms \eqref{2eqn:in proof Lemma 2.5 (1)} with $(*)$ being
of order 2, i.e., $b^2$, $bz$ or $z^2$. This happens only when $nm$ is even.
We write 
\[
\varphi[(F^{(N,n)}(z))^m]=S+R.
\]
For the estimate we set 
\[
K=K_m=\max\{1,|\varphi(b^sz^t)|\,;\, 2\le s+t\le m\}.
\]
First each term constituting $S$ is estimated as
\[
|\varphi(1\otimes \dots \otimes (*) \otimes \dots \otimes
 (*)\otimes \dots \otimes (*) \otimes \dotsb \otimes 1)|
\le K^{nm/2}.
\]
We need to count the number of such terms.
The number of choice of places where $(*)$ appears 
is given by $\binom{N}{nm/2}$.
Then the arrangements of $b^2, bz, z^2$ at a set of chosen places $(*)$ is a finite number $c_1(m,n)$
depending on $m$ and $n$, though the explicit expression is not simple.
Hence
\[
S\le K^{nm/2} \binom{N}{nm/2} c_1(m,n)\le C_1(m,n) N^{nm/2}
\]
for some constant $C_1(m,n)$.
If  $nm$ is odd, letting $S$ be the sum of terms \eqref{2eqn:in proof Lemma 2.5 (1)} with $(*)$ being
of order 2 except one $(*)$ of order 3,
we have
\[
S\le K^{(nm-1)/2} \binom{N}{(nm-1)/2} c_2(m,n)\le C_2(m,n) N^{(nm-1)/2}
\]
In any case we have
\[
S=O(N^{[nm/2]}).
\]
By a similar argument we see easily that the rest term $R$ has a smaller order:
\[
R=o(N^{[nm/2]}).
\]
Consequently, we have
\[
\varphi(z_{n,N}^m)
=\varphi\bigg(\bigg(\frac{F^{(N,n)}(z)}{N^{(n+1)/2}}\bigg)^m\bigg)
\le \frac{S+R}{N^{(n+1)m/2}}
=O(N^{-m/2}),
\]
which tends to zero as $N\rightarrow\infty$.
This completes the proof.
\end{proof}

%%%%%%%%%%%%%%%%%%%%%%%%%%%%%%%%%%%%%%%%%%%%%%%%%%%%%%%%%%%%%%%%%%%%%%%%%%%%%%%%%%%%
\section{Proof of Theorem~\ref{3thm:main result}}
\label{sec:distance k-graphs}

Associated with the finite graph $G=(V,E)$, we consider the full matrix algebra $\mathcal{M}(V)$,
that is the $*$-algebra of matrices with index set $V\times V$.
The adjacency algebra $\mathcal{A}(G)$ is a $*$-subalgebra of $\mathcal{M}(V)$.
The normalized trace $\varphi_{\mathrm{tr}}$ on $\mathcal{A}(G)$
defined in \eqref{2eqn:normalized trace} is naturally extended to 
$\mathcal{M}(V)$ and is denoted by the same symbol.
As $\mathcal{A}(G^{[N,k]})$ is a $*$-subalgebra of $\mathcal{M}(V)^{\otimes N}$,
the normalized trace on $\mathcal{A}(G^{[N,k]})$ coincides with
the restriction of the product state $\varphi_{\mathrm{tr}}^{\otimes N}$ on $\mathcal{M}(V)^{\otimes N}$,
which is denoted by $\varphi$ for simplicity hereafter.

Let $A$ and $A^{[N,k]}$ be the adjacency matrices of $G$
and $G^{[N,k]}$, respectively.
Following the notation in \eqref{3eqn:def of b(N,n)} we set
\[
A^{(N,n)}
=\sum_{1\le i_1<\dots<i_n\le N} A(i_1,\dots, i_n)
=\frac{1}{n!} \sum_{\substack{i_1,\dots,i_n \\ \neq}} A(i_1,\dots, i_n)
\]
and define a real random variable $C(N,k)$ by
\begin{equation}\label{3eqn:decomposition}
A^{[N,k]}=A^{(N,k)}+C(N,k).
\end{equation}
To our goal we will first show that
\begin{equation}\label{3eqn:aim}
\frac{C(N,k)}{N^{k/2}}\mconv 0
\quad\text{as $N\rightarrow\infty$}.
\end{equation}

\begin{remark}
As is easily seen, the adjacency matrix of $G^N$ is given by
\[
A^{(N,1)}
=\sum_{i=1}^N A(i)
=\sum_{i=1}^N 1\otimes\dotsb\otimes A\otimes\dotsb\otimes 1,
\]
where $A$ sits at the $i$-th position.
Therefore, we have
\[
A^{[N,1]}=A^{(N,1)}.
\]
However, for $k\ge2$, $A^{[N,k]}=A^{(N,k)}$ does not hold in general.
While, it is easily verified that 
$A^{[N,k]}=A^{(N,k)}$ holds when $G$ is a complete graph and $1\le k\le N$.
The $N$-fold Cartesian power of the complete graph $K_d$ ($d$ stands for the
number of vertices) is called a \textit{Hamming graph} and is denoted by $H(N,d)$.
The eigenvalue distribution of the distance $k$-graph of $H(N,d)$
is obtained by means of the Krawtchouk polynomials, see \cite{Obata2012} for $d=2$
and \cite{Hibino} for an arbitrary $d$.
\end{remark}

\begin{lemma}\label{lem3.4}
Let $G=(V,E)$ be a finite connected graph and
the distance between two vertices $\xi,\eta\in V$ is denoted by $\partial_G(\xi,\eta)$.
Then for $N$-fold Cartesian power $G^N$ we have
\[
\partial_{G^N}(x,y)=\sum_{i=1}^N \partial_G(\xi_i,\eta_i),
\]
where $x=(\xi_1,\dots,\xi_N), y=(\eta_1,\dots,\eta_N)\in V^N$.
\end{lemma}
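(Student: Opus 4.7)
The plan is to prove the identity by establishing matching upper and lower bounds. I rely on the defining property of the Cartesian product: two vertices $x=(\xi_1,\dots,\xi_N)$ and $x'=(\xi_1',\dots,\xi_N')$ of $G^N$ are adjacent if and only if they differ in exactly one coordinate $i$, with $\{\xi_i,\xi_i'\}\in E$. Consequently, along every edge of $G^N$ exactly one coordinate changes, and that change corresponds to an edge of $G$ in the coordinate where the change occurs.

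For the upper bound $\partial_{G^N}(x,y)\le\sum_{i=1}^N \partial_G(\xi_i,\eta_i)$, I would construct an explicit walk. For each $i=1,2,\dots,N$ in turn, fix a shortest $\xi_i$-to-$\eta_i$ path in $G$ of length $\partial_G(\xi_i,\eta_i)$, and execute it inside $G^N$ by modifying only the $i$-th coordinate at each step. Concatenating the $N$ resulting segments produces a walk from $x$ to $y$ of total length $\sum_{i=1}^N \partial_G(\xi_i,\eta_i)$.

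For the matching lower bound, fix any geodesic $x=w_0\to w_1\to\cdots\to w_L=y$ in $G^N$ with $L=\partial_{G^N}(x,y)$. For each coordinate $i$, let $L_i$ denote the number of steps at which the $i$-th coordinate actually changes; since exactly one coordinate changes at each step, we have $L=L_1+\cdots+L_N$. Collapsing consecutive repetitions, the sequence of $i$-th coordinates along the walk is a genuine walk in $G$ from $\xi_i$ to $\eta_i$ of length $L_i$, hence $L_i\ge\partial_G(\xi_i,\eta_i)$. Summing over $i$ gives $L\ge\sum_{i=1}^N\partial_G(\xi_i,\eta_i)$.

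Combining the two inequalities yields the claimed equality. The argument is entirely elementary and presents no substantive obstacle; the only small point demanding care is the verification that the induced sequence of $i$-th coordinates along a walk in $G^N$, after removal of consecutive repetitions, really is a walk in $G$, but this is immediate from the Cartesian product adjacency rule recalled at the outset. An alternative route would be induction on $N$, using the case $N=2$ (which is the standard fact $\partial_{G\times H}((\xi_1,\xi_2),(\eta_1,\eta_2))=\partial_G(\xi_1,\eta_1)+\partial_H(\xi_2,\eta_2)$), but the direct two-inequality proof above is cleaner.
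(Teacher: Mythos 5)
Your proof is correct and complete; the paper itself dismisses this lemma with the single word ``Straightforward,'' so you have simply supplied the standard argument it omits. The two-inequality scheme---concatenating coordinatewise geodesics for the upper bound, and projecting a geodesic of $G^N$ onto each coordinate (using that exactly one coordinate changes per step) for the lower bound---is exactly the canonical proof of this well-known fact about Cartesian products, and the one small point you flag (that the collapsed coordinate projection is a genuine walk in $G$) is handled correctly.
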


\begin{proof}
Straightforward.
\end{proof}

It is convenient to introduce the distance matrix of $G$.
For $k=1,2,\dots$ let $D^{[k]}$ be the $k$-distance matrix of $G$,
which is a matrix indexed by $V\times V$ and defined by
\[
(D^{[k]})_{xy}
=\begin{cases}
1, & \partial_G(x,y)=k, \\
0, & \text{otherwise}.
\end{cases}
\]
In other words, $D^{[k]}$ is the adjacency matrix of the distance $k$-graph of $G$.
By definition $A=D^{[1]}$ and $A^{[1,k]}=D^{[k]}$.

We need a concise expression for $C(N,k)$.
For illustration we consider the case of $k=2$.
For two vertices $x=(\xi_1,\dots,\xi_N), y=(\eta_1,\dots,\eta_N)\in V^N$, 
\[
\partial_{G^N}(x,y)
=\sum_{i=1}^N \partial_G(\xi_i,\eta_i)=2
\]
holds if and only if one of the following two cases occurs:
\begin{enumerate}
\item[(i)] there exist $1<i_1<i_2\le N$ such that 
$\partial_G(\xi_{i_1},\eta_{i_1})=\partial_G(\xi_{i_2},\eta_{i_2})=1$
and $\partial_G(\xi_{j},\eta_{j})=0$ for all $j\neq i_1,i_2$;
\item[(ii)] there exists $1\le i\le N$ such that
$\partial_G(\xi_i,\eta_i)=2$
and $\partial_G(\xi_{j},\eta_{j})=0$ for all $j\neq i$.
\end{enumerate}
We then have
\[
A^{[N,2]}
=\sum_{1\le i_1<i_2\le N} D^{[1]}(i_1,i_2) +\sum_{1\le i\le N} D^{[2]}(i)
=A^{(N,2)}+C(N,2).
\]

The above argument is applied to $A^{[N,k]}$ for a general $k$.
For $k\ge1$ we set 
\[
\Lambda(k)
=\left\{\lambda=(j_1,j_2,\dots)\,;\,
\text{$j_h\ge0$ are integers such that $\displaystyle\sum_{h=1}^\infty hj_h=k$}
\right\}.
\]
An element of $\Lambda(k)$ is called a \textit{partition} of $k$.
For $\lambda=(j_1,j_2,\dots)\in\Lambda(k)$ we define
\[
C(\lambda)=\sum 1\otimes \dotsb\otimes (*) \otimes \dotsb \otimes (*) \otimes \dotsb \otimes 1,
\]
where $(*)$ is occupied by $D^{[h]}$ with $j_h$-times ($h=1,2,\dots$)
and the sum is taken over all possible arrangements.
For $\lambda_0=(k,0,0,\dots)$, we have
\[
C(\lambda_0)=\sum_{1\le i_1<\dots<i_k\le N} D^{[1]}(i_1,\dots,i_k) =A^{(N,k)}.
\]

\begin{lemma}\label{3lem:expression of A}
For $k\ge1$, we have
\begin{equation}\label{3eqn:decomposition for k}
A^{[N,k]}
=A^{(N,k)}
+\sum_{\lambda\in\Lambda(k)\backslash\{\lambda_0\}} C(\lambda).
\end{equation}
\end{lemma}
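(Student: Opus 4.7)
The plan is to compare the two sides entrywise. Fix $x = (\xi_1, \dots, \xi_N)$ and $y = (\eta_1, \dots, \eta_N)$ in $V^N$ and set $d_i = \partial_G(\xi_i, \eta_i)$. By Lemma~\ref{lem3.4}, $\partial_{G^N}(x,y) = \sum_i d_i$, so the $(x,y)$-entry of $A^{[N,k]}$ is $1$ precisely when $\sum_i d_i = k$ and $0$ otherwise. To each such pair I associate the partition $\lambda(x,y) = (j_1, j_2, \dots)$ defined by $j_h = |\{i : d_i = h\}|$; since $\sum_h h j_h = \sum_i d_i = k$, indeed $\lambda(x,y) \in \Lambda(k)$.

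Next I read off $(C(\lambda))_{xy}$ for a prescribed $\lambda = (j_1, j_2, \dots) \in \Lambda(k)$. Each summand in the definition of $C(\lambda)$ is a tensor product that places $D^{[h]}$ at some $j_h$ positions (for each $h \ge 1$) and the identity elsewhere. Its $(x,y)$-entry equals $1$ iff $d_i = h$ at every position selected for $D^{[h]}$ and $\xi_i = \eta_i$, i.e.\ $d_i = 0$, at every remaining position. For a fixed pair $(x,y)$ these constraints determine the arrangement uniquely, and such an arrangement exists iff $\lambda(x,y) = \lambda$. Consequently $(C(\lambda))_{xy} = 1$ precisely when $\lambda(x,y) = \lambda$, and vanishes otherwise.

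Summing over $\lambda \in \Lambda(k)$ therefore produces the indicator of $\{\partial_{G^N}(x,y) = k\}$, which agrees with $(A^{[N,k]})_{xy}$. Isolating the distinguished partition $\lambda_0 = (k, 0, 0, \dots)$, which as observed immediately before the lemma reproduces $A^{(N,k)}$, yields the identity \eqref{3eqn:decomposition for k}.

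The argument is essentially combinatorial bookkeeping and I do not anticipate a serious obstacle; the only delicate point is verifying uniqueness of the contributing arrangement inside a given $C(\lambda)$, but this follows at once from the fact that the sequence $(d_1, \dots, d_N)$ is determined by the pair $(x,y)$, so that the sets $\{i : d_i = h\}$ are forced.
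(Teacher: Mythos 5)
Your entrywise argument is correct and is exactly the counting that the paper's (very terse) proof alludes to: the paper fixes $x,y$, notes that $\partial_{G^N}(x,y)=k$ iff $\sum_i\partial_G(\xi_i,\eta_i)=k$, and then "counts the number of pairs having the same distance $h$," which is precisely your partition $\lambda(x,y)$ with $j_h=|\{i: d_i=h\}|$. You have simply written out the details, including the useful observation that within each $C(\lambda)$ at most one arrangement contributes to a given entry, so no overcounting occurs.
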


\begin{proof}
For $x=(\xi_1,\dots,\xi_N)$ and $y=(\eta_1,\dots,\eta_N)\in V^N$,
we have $(A^{[N,k]})_{xy}=1$, i.e., $\partial_{G^N}(x,y)=k$ if and only if 
\[
\sum_{i=1}^N \partial_G(\xi_i,\eta_i)=k.
\]
Then by counting the number of pairs $(\xi_i,\eta_i)$ having the same distance $h$,
we come to \eqref{3eqn:decomposition for k} with no difficulty.
\end{proof}

\begin{lemma}\label{lem:estimate C(lambda)}
For $\lambda\in \Lambda(k)\backslash \{\lambda_0\}$ we have
\[
\frac{C(\lambda)}{N^{k/2}} \mconv 0.
\]
\end{lemma}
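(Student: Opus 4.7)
The plan is to mimic the proof of Lemma~\ref{3lem:znN tends to 0}, combining a combinatorial count of occupied tensor positions with the crucial fact that $\varphi_{\mathrm{tr}}(D^{[h]})=0$ for every $h\ge 1$. First I would record the combinatorial observation that, writing $n=\sum_h j_h$ for $\lambda=(j_1,j_2,\dots)\in\Lambda(k)$, the identity
\[
k=\sum_h hj_h=n+\sum_{h\ge 2}(h-1)j_h
\]
forces $n<k$ whenever $\lambda\neq\lambda_0$, since then some $j_h$ with $h\ge 2$ must be positive. This strict gain is what eventually lets the scaling $N^{k/2}$ beat the counting of surviving terms in $C(\lambda)^m$.

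Next I would expand $\varphi(C(\lambda)^m)$ using the product state $\varphi=\varphi_{\mathrm{tr}}^{\otimes N}$. Each summand comes from an $m$-tuple of arrangements, and factorizes as a product over the $N$ tensor slots. At slot $i$ the factor is $\varphi_{\mathrm{tr}}$ applied to a product $D^{[h_1]}\dotsb D^{[h_s]}$, where $s$ is the number of the $m$ copies that place a factor at slot $i$. The key input is that $\varphi_{\mathrm{tr}}(D^{[h]})=|V|^{-1}\mathrm{Tr}\,D^{[h]}=0$ for $h\ge 1$, because distance-$h$ matrices have zero diagonal. Consequently, any configuration in which some occupied slot receives exactly one factor contributes $0$, and in every surviving configuration each occupied slot carries at least two factors. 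In particular, for $m=1$ every arrangement puts a single $D^{[h]}$ in each of its $n$ occupied slots, hence $\varphi(C(\lambda))=0$ outright.

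For $m\ge 2$, since $C(\lambda)$ places $n$ factors per copy, the total number of factors placed is $mn$; the constraint of at least two factors per occupied slot limits the number of occupied slots to at most $\lfloor mn/2\rfloor$. Counting such configurations gives $O(N^{\lfloor mn/2\rfloor})$ terms. Each surviving local trace $\varphi_{\mathrm{tr}}(D^{[h_1]}\dotsb D^{[h_s]})$ is bounded in absolute value by $|V|^{s-1}\le |V|^{m-1}$ (multiplying $0/1$-matrices only introduces factors of $|V|$), so it is bounded uniformly in $N$ by a constant depending only on $G$, $k$, $m$. Thus
\[
\left|\varphi\!\left(\left(\frac{C(\lambda)}{N^{k/2}}\right)^{\!m}\right)\right|
=O\!\left(N^{\lfloor mn/2\rfloor - mk/2}\right)
=O\!\left(N^{m(n-k)/2}\right) \longrightarrow 0
\]
as $N\to\infty$, since $n<k$.

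The main obstacle is the bookkeeping of the multi-arrangement count, analogous to the $S$ and $R$ split in Lemma~\ref{3lem:znN tends to 0}: one must carefully partition the $m$-tuples of arrangements according to the multiplicity pattern of shared slots and argue that the patterns with every occupied slot of multiplicity $\ge 2$ really do dominate. Once this is set up cleanly, the strict inequality $n<k$ from the first step closes the argument without any further delicate analysis.
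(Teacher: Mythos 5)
Your proposal is correct and follows essentially the same route as the paper's proof: both arguments use $\varphi_{\mathrm{tr}}(D^{[h]})=0$ to force every occupied tensor slot to carry at least two factors, count the surviving configurations as $O(N^{\lfloor mJ/2\rfloor})$ with $J=\sum_h j_h$, and close with the strict inequality $J<k$ for $\lambda\neq\lambda_0$. The only (harmless) difference is that you bound all surviving terms in one stroke via the cap $\lfloor mJ/2\rfloor$ on occupied slots, whereas the paper splits the sum into a dominant part $S$ (all multiplicities $2$, up to one of multiplicity $3$) and a remainder $R=o(N^{[mJ/2]})$.
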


\begin{proof}
Let $\lambda=(j_1,j_2,\dots)$ and $J=\sum j_h$.
Let $M(m)$ be the maximum of the absolute value of the 
mixed moments of $D^{[1]},D^{[2]},\dots$ of degree $\le m$.
By explicit expansion 
\[
C(\lambda)^m=\sum 1\otimes\dots\otimes(*)\otimes\dotsb\otimes(*)\otimes\dotsb\otimes 1,
\]
where $(*)$ is a non-commutative monomial in $D^{[1]},D^{[2]},\dots$ of degree at most $m$.
On computing the value $\varphi(C(\lambda)^m)$, the terms having a monomial $D^{[h]}$ 
of degree 1 do not contribute since $\varphi(D^{[h]})=0$.
Hence we need to consider only the terms where every $(*)$ is a monomial of degree at least 2.
We write
\[
\varphi(C(\lambda)^m)
=S+R,
\]
where
\[
S=\sum \varphi(1\otimes\dots\otimes(*)\otimes\dotsb\otimes(*)\otimes\dotsb\otimes 1),
\]
where all $(*)$'s are monomials of degree 2 or
all $(*)$'s are monomials of degree 2 except one which is of degree 3
according to the parity of $mJ$.
For $S$ we see that the number of choice of places where $(*)$ appears 
is given by $\binom{N}{[mJ/2]}$.
Then the arrangements of $D^{[1]},D^{[2]},\dots$ at the chosen places is a finite number $c(m,J)$
depending on $m$ and $J$ though the explicit expression is not simple.
Hence
\[
|S|\le M(m)^{[mJ/2]}\binom{N}{[mJ/2]} c(m,k)\le C_1(m,k) N^{[mJ/2]}
\]
for some constant $C_1(m,k)$.
Similarly, for $R$ the number of choice of places where $(*)$ appears 
is $\le\binom{N}{[mJ/2]-1}$ so that
\[
|R|= o(N^{[mJ/2]}).
\]
We note that for $\lambda\in\Lambda(k)\backslash\{\lambda_0\}$ 
\[
J=\sum_{h}j_h<\sum_{h} hj_h=k.
\]
Hence $J-k\le -1$ and 
\[
\left[\frac{mJ}{2}\right]-\frac{km}{2}\le -\frac{m}{2}.
\]
Consequently,
\[
\varphi\bigg(\bigg(\frac{C(\lambda)}{N^{k/2}}\bigg)^m\bigg)
\le \frac{C_1(m,k) N^{[mJ/2]}+ o(N^{[mJ/2]})}{N^{km/2}}
=O(N^{-m/2})\rightarrow 0.
\]
This completes the proof.
\end{proof}

\begin{proof}[Proof of Theorem~\ref{3thm:main result}]
By Lemma~\ref{3lem:expression of A} we have
\begin{align}
A^{[N,k]}
&=A^{(N,k)}+C(N,k),
\label{3eqn:decomposition for k (1)}\\
C(N,k)
&=\sum_{\lambda\in\Lambda(k)\backslash\{\lambda_0\}} C(\lambda).
\end{align}
Upon applying Theorem~\ref{2thm:convergence of B[n]} to $A^{(N,k)}$ we take normalization into account.
Note first that
\[
\varphi(A)=0,
\qquad
\varphi(A^2)=\frac{2|E|}{|V|}.
\]
In fact, $\varphi(A^2)$ is the mean degree of $G$.
Viewing that $A/\sqrt{\varphi(A^2)}$ is a normalized real random variable, we apply
Theorem~\ref{2thm:convergence of B[n]} to obtain
\[
\frac{A^{(N,k)}}{N^{k/2}\varphi(A^2)^{k/2}} 
\mconv \frac{1}{k!}\,\Tilde{H}_k(g).
\]
Therefore, 
\begin{equation}\label{3eqn:conv of B[N,k]}
\frac{A^{(N,k)}}{N^{k/2}} 
\mconv \left(\frac{2|E|}{|V|}\right)^{k/2}\frac{1}{k!}\,\Tilde{H}_k(g).
\end{equation}
On the other hand, for $C(N,k)$ in \eqref{3eqn:decomposition for k (1)} we have
\begin{equation}\label{3eqn:conv of C(N,k)}
\frac{C(N,k)}{N^{k/2}} 
\mconv0
\end{equation}
by Lemma~\ref{lem:estimate C(lambda)} and Proposition~\ref{1prop:basic convergence result}.
Finally, the assertion follows from
\eqref{3eqn:conv of B[N,k]} and \eqref{3eqn:conv of C(N,k)}
with the help of Proposition~\ref{1prop:basic convergence result} again.
\end{proof}

\begin{remark}
During the above argument we needed to restrict ourselves to the tracial states, 
although the combinatorial limit formula in Theorem~\ref{2thm:convergence of B[n]} holds
for a general state.
This restriction is reasonable to obtain the eigenvalue distribution of a graph
since the normalized trace on the adjacency algebra 
is related to the eigenvalue distribution of the graph, 
see Section~\ref{subsec:Adjacenct Matrix as Algebraic Random Variable}.
However, it is plausible that our argument is modified to cover a general case,
for example, a vector state (sometimes called a vacuum state) on the adjacency algebra.
The work is now in progress.
\end{remark}

\subsection*{Acknowledgements}
N. O. was supported by the JSPS Grant-in-Aid for Challenging Exploratory Research No. 23654046.

\end{document}